\documentclass[11pt]{article}
\usepackage[margin=1in]{geometry}
\usepackage{amsmath,amssymb,amsthm,mathtools}
\usepackage{microtype}

\usepackage{float}
\usepackage{tikz}
\usepackage{pgfplots}
\pgfplotsset{compat=1.16}
\usetikzlibrary{calc,arrows.meta,decorations.pathreplacing}
\usepackage[colorlinks=true,linkcolor=blue,citecolor=blue,urlcolor=blue,
  pdftitle={Explicit finite size corrections for Toeplitz determinants and
            rectangular domino tilings},
  pdfauthor={Miguel Tierz},
  pdfsubject={Toeplitz and Hankel determinants, dimer models},
  pdfkeywords={Toeplitz determinant, Hankel matrix, banded Toeplitz,
    finite section, Borodin--Okounkov--Geronimo--Case identity, strong Szego theorem,
    domino tiling, dimer model, Kasteleyn, Schur polynomial}]{hyperref}
\usepackage[nameinlink,noabbrev]{cleveref}

\definecolor{bandcol}{RGB}{219,229,240}
\definecolor{cornercol}{RGB}{186,62,50}
\definecolor{cornerfill}{RGB}{246,211,205}
\definecolor{imgcol}{RGB}{115,115,124}
\definecolor{accent}{RGB}{28,80,133}

\newtheorem{theorem}{Theorem}[section]
\newtheorem{lemma}[theorem]{Lemma}
\newtheorem{proposition}[theorem]{Proposition}
\newtheorem{corollary}[theorem]{Corollary}
\theoremstyle{remark}
\newtheorem{remark}[theorem]{Remark}

\newcommand{\ii}{\mathrm{i}}
\newcommand{\bbC}{\mathbb{C}}
\newcommand{\bbT}{\mathbb{T}}
\newcommand{\cO}{\mathcal{O}}

\title{Explicit finite size corrections for Toeplitz determinants and rectangular domino tilings}
\author{Miguel Tierz\\[3pt]
{\small Shanghai Institute for Mathematics and Interdisciplinary Sciences (SIMIS),}\\
{\small Block A, International Innovation Plaza, No.~657 Songhu Road,}\\
{\small Yangpu District, Shanghai 200438, China}\\[3pt]
{\small\texttt{tierz@simis.cn}}}
\date{}

\begin{document}
\maketitle
\begin{abstract}
Finite Toeplitz truncation does not, in general, preserve multiplication.
For products of \(m\) \(N\times N\) tridiagonal Toeplitz matrices with symbols
\(z+z^{-1}+a_h\), we make the two boundary corrections in Widom's product
formula for finite sections explicit.  The method of images gives two Hankel
corrections when \(m\le N+2\), a sharp uniform degree bound.  For
\(N\ge m-1\), the ratio of the determinant of the matrix product to the
Toeplitz determinant with the product symbol reduces, whenever defined, to a
determinant of order \(2(m-1)\), which reversal symmetry splits into two
determinants of order \(m-1\).  A rectangular Schur polynomial gives an
exact formula for the Toeplitz determinant with the product symbol, including
when roots coincide.  For symbols nonzero on the unit circle, classical identities for rational
symbols give an exact \(m\times m\) determinant formula for the ratio, with an
explicit confluent form for repeated roots.  With \(\rho_h\) the zero of
\(z+z^{-1}+a_h\) in \(|z|<1\), the limit is
\(\prod_{i<j}(1-\rho_i\rho_j)^2\) as \(N\to\infty\) at fixed \(m,a_h\).
For rectangular domino tilings, these formulas compare the classical
Kasteleyn--Temperley--Fisher product with the banded Toeplitz determinant
formed from the product symbol.  We obtain the first exponentially small
correction at fixed width and relate this comparison to the Toeplitz
determinant furnished by the dual Cauchy identity.
\end{abstract}

\section{Introduction}
Let \(N\ge1\), and let
\(
G(z)=\sum_{k\in\mathbb Z}g_kz^k
\)
be a Laurent polynomial, meaning that only finitely many coefficients \(g_k\)
are nonzero.  Its \(N\times N\) Toeplitz matrix is
\begin{equation}\label{eq:TN_definition_intro}
T_N(G):=[g_{i-j}]_{i,j=1}^N,
\qquad
D_N(G):=\det T_N(G).
\end{equation}
Thus the entries of \(T_N(G)\) are constant along diagonals.  Equivalently,
it is the principal \(N\times N\) block of the doubly infinite matrix
\([g_{i-j}]_{i,j\in\mathbb Z}\).  The Laurent polynomial \(G\) is called the
\emph{symbol}, and \(G\mapsto T_N(G)\) is called finite Toeplitz truncation
(or passage to a finite Toeplitz section).

This truncation is linear but is not, in general, multiplicative:
\[
T_N(f_1)\cdots T_N(f_m)\ne T_N(f_1\cdots f_m).
\]
For two symbols \(f(z)=\sum_k f_kz^k\) and
\(g(z)=\sum_k g_kz^k\), the reason is already visible entrywise:
\[
\bigl(T_N(f)T_N(g)\bigr)_{ij}
=\sum_{\ell=1}^{N}f_{i-\ell}g_{\ell-j},
\qquad
\bigl(T_N(fg)\bigr)_{ij}
=\sum_{\ell\in\mathbb Z}f_{i-\ell}g_{\ell-j}.
\]
The first sum omits the intermediate indices outside
\(\{1,\dots,N\}\).  Widom's product formula for finite sections expresses the
discrepancy as the sum of two Hankel products
\cite[Eq.~(1.4)]{Widom76}.  See also
\cite[Prop.~3.10]{BG05}.  Thus one correction comes from each finite
boundary.  For the even tridiagonal symbols considered here, the two Hankel
products in the formula for two factors are rank one.  We make the corrections
explicit for an arbitrary number of factors and then determine their effect
on the determinant.

Rectangular domino tilings provide the application: their partition function
has both the Kasteleyn--Temperley--Fisher product formula
\cite{Kas61,TF61,Fisher61} and a Toeplitz determinant formula derived from the
dual Cauchy identity \cite{Macdonald95,Gessel90}.

Fix \(m\ge1\) and parameters \(a_1,\dots,a_m\in\bbC\).  Define
\[
f_h(z):=z+z^{-1}+a_h,\qquad
L_N:=T_N(z+z^{-1}),\qquad
P(x):=\prod_{h=1}^m(x+a_h),\qquad
F_m(z):=P(z+z^{-1}).
\]
The matrix \(L_N\) has entries \(1\) when \(|i-j|=1\) and \(0\) otherwise.
It is the adjacency matrix of the path with vertices \(1,\dots,N\).
Consequently,
\(
T_N(f_h)=L_N+a_hI_N
\), where \(I_N\) is the \(N\times N\) identity matrix.  These factors
commute.  With the usual polynomial evaluation
\(
P(L_N)=\sum_{\ell}p_\ell L_N^\ell
\)
when \(P(x)=\sum_\ell p_\ell x^\ell\), we therefore have
\[
P(L_N)=\prod_{h=1}^mT_N(f_h),
\]
whereas \(T_N(F_m)\) is obtained by multiplying the Laurent symbols first
and truncating only afterwards.

To state their difference precisely, for
\(G(z)=\sum_kg_kz^k\) define
\begin{equation}\label{eq:HN_JN_definition_intro}
H_N(G):=[g_{i+j}]_{i,j=1}^N,
\qquad
(J_N)_{ij}:=
\begin{cases}
1,&i+j=N+1,\\
0,&i+j\ne N+1.
\end{cases}
\end{equation}
The matrix \(H_N(G)\) is Hankel: its entries are constant along
antidiagonals.  With indices starting at \(1\), positive Laurent coefficients
enter \(H_N(G)\).  For a banded symbol, its nonzero entries begin at the
upper left corner.  The matrix \(J_N\) reverses the order of the indices, so
\[
J_NH_N(G)J_N
=[g_{2(N+1)-i-j}]_{i,j=1}^N
\]
is the corresponding lower right term.  In these conventions the identity
for banded symbols used throughout the paper is
\begin{equation}\label{eq:ToeplitzHankel_intro}
P(L_N)=T_N(F_m)-H_N(F_m)-J_NH_N(F_m)J_N,
\qquad m\le N+2.
\end{equation}
The condition \(m\le N+2\) is sharp when the identity is required uniformly
for all polynomials of degree \(m\).  The determinant reduction below uses
the slightly stronger condition \(N\ge m-1\).  Indeed, if
\(
F_m(z)=\sum_{k=-m}^mc_kz^k
\), then \(c_k=0\) for \(|k|>m\), and \(c_{i+j}\) can be nonzero only when
\(i,j\le m-1\).  Thus the two corrections in
\eqref{eq:ToeplitzHankel_intro} are supported in the first and last
\(m-1\) coordinates and have combined rank at most \(2(m-1)\), independently
of \(N\).  Although the enclosing coordinate blocks overlap when
\(N<2(m-1)\), their disjointness is not needed.  \Cref{fig:corners} shows the resulting pattern.

The standard sine vectors
\[
v^{(r)}_j=\sin\frac{jr\pi}{N+1},
\qquad 1\le j,r\le N,
\]
diagonalize \(L_N\), with eigenvalues
\(2\cos(r\pi/(N+1))\).  They therefore diagonalize every \(P(L_N)\).
Explicit formulas for the entries of positive integer powers of tridiagonal
Toeplitz matrices are given in \cite{Gutierrez08}, and a binomial form for
sufficiently large matrix order appears in \cite{Gutierrez13}.

The matrices diagonalized by this discrete sine transform of type I form the
matrix algebra \(\tau_N\) \cite{BiniCapovani83}.  A representation of a
matrix in \(\tau_N\) as the difference of a Toeplitz matrix and a Hankel matrix
is given in
\cite[Sec.~2, Eq.~(3)]{HendrickxVanBarel02}.  The same work represents
a symmetric banded Toeplitz matrix using two corner corrections and splits
the associated boundary system by reversal symmetry
\cite[Eqs.~(7) and (9)]{HendrickxVanBarel02}.  A more recent description of the
algebra generated by a finite tridiagonal matrix uses a Toeplitz matrix and
two opposite Hankel corrections
\cite[Sec.~5.1, pp.~698--699]{BiniMeini25}.  When the two endpoint
parameters in that construction vanish, their formulas specialize, for
\(\deg P\le N-1\), to the representation in
\eqref{eq:ToeplitzHankel_intro}: one Toeplitz matrix minus two Hankel matrices.

For arbitrary bandwidth and fixed polynomial degree, a related localization
statement is recorded in
\cite[Sec.~III-D, pp.~1170--1171]{CrespoGutierrez07}: once \(N\) is
sufficiently large, a polynomial of a banded Toeplitz matrix is Toeplitz
outside the upper left and lower right corner blocks, whose sizes and entries
are independent of \(N\).  Entrywise convergence for continuous functions of
banded Hermitian block Toeplitz matrices and their perturbations, with
convergence estimates for analytic functions, is studied in
\cite{GutierrezCrespoBottcher07}.

The product formula for finite sections accounts for the two boundary terms.  An
extension to certain distributional symbols is given in
\cite[Prop.~4.4]{BasorEhrhardt01}.  For the semi-infinite path
operator, the corresponding identity with one boundary, expressed as
Toeplitz minus Hankel, is contained in the framework of compatible pairs
\cite[Prop.~3.1(III)]{BasorEhrhardt09}.  An exact determinant
identity for finite sections of these compatible pairs is given in
\cite[Prop.~4.1]{BasorEhrhardt09}.

Determinant asymptotics when a function is applied to a semi-infinite Toeplitz
operator before taking a finite section are treated separately in
\cite[Prop.~4.1 and Thm.~4.2]
{BasorEhrhardtVirtanen25}.  The order of the operations matters: the present
matrix is \(P(T_N(z+z^{-1}))\), so polynomial evaluation takes place after
finite truncation and both boundaries contribute.  An asymptotic formula for
\(\operatorname{tr} f(T_N(a))\), with \(f\) analytic, is given in
\cite[Thm.~6.2]{Widom76}.  A refinement of the remainder estimate for weighted
Wiener algebras appears in \cite[Thm.~1.5]{Karlovich07}.  These
asymptotic results do not give the exact matrix identity with two boundary
terms or the reduction for finite \(N\)
to determinants of dimension independent of \(N\).

Functions of finite difference matrices also exhibit a structure involving
Toeplitz and Hankel matrices together with reflection
\cite[Eqs.~(3.6) and (3.11)--(3.12)]{StrangMacNamara14}.  In work on spectral distribution, the
discrepancy between a product of two
finite Toeplitz matrices and the Toeplitz matrix with the product symbol is
known to have uniformly bounded rank when one factor is a Laurent polynomial
\cite[Rem.~3.3]{SerraCapizzanoSesanaStrouse10}.  We give a walk proof
to fix the indexing, cover overlapping boundary
blocks, and identify the uniform degree range.  The iteration in
Remark~\ref{rem:degree_sharp} proves that this range is sharp.  A general
result on boundary localization for Toeplitz quasicommutators shows, in a
multidimensional block setting, that elements of the quasicommutator ideal
have asymptotically no mass in the interior and determines which boundary
strata may carry mass
\cite{BottcherGutierrezCrespo07}.  This is an asymptotic localization result
rather than an exact finite identity in one dimension.  Determinants of sums
of Toeplitz and Hankel matrices form a much broader asymptotic class
\cite{DIK11,GharakhlooIts20}.

Exact formulas for determinants of certain
even matrices formed as the sum of a Toeplitz matrix and a Hankel matrix are
given in \cite{Elouafi15}.
Factorizations of full Toeplitz determinants with even symbols into
determinants of sums of Toeplitz and Hankel matrices, together with Fredholm
expansions for finite rank kernels associated with rational symbols, are developed in
\cite[Secs.~5--6]{BasorConrey24}.
More recently, exact formulas were obtained for finite matrices that are the
sum of a Toeplitz matrix and a Hankel matrix, both generated by a rational symbol
\cite{BasorMorrison26}.  Those formulas
concern a single Hankel summand, whereas \eqref{eq:ToeplitzHankel_intro}
contains corrections at both finite boundaries.

\begin{figure}[!htbp]
\centering
\begin{tikzpicture}[x=3.6mm,y=-3.6mm,line join=round]
\begin{scope}
  \def\N{14}\def\M{4}\def\K{3}\def\MP{5}
  \fill[bandcol]
    (0,0) -- (\M,0) -- (\N,{\N-\M}) -- (\N,\N) -- ({\N-\M},\N) -- (0,\M) -- cycle;
  \foreach \g in {0,...,14}{
    \draw[gray!30,line width=.15pt] (\g,0) -- (\g,\N);
    \draw[gray!30,line width=.15pt] (0,\g) -- (\N,\g);}
  \draw[black,line width=.7pt] (0,0) rectangle (\N,\N);
  \foreach \i in {1,...,\K}{\foreach \j in {1,...,\K}{
     \pgfmathparse{int(\i+\j)}\ifnum\pgfmathresult<\MP
       \fill[cornerfill] (\j-1,\i-1) rectangle (\j,\i);
       \draw[cornercol,line width=.2pt] (\j-1,\i-1) rectangle (\j,\i);
     \fi}}
  \foreach \i in {1,...,\K}{\foreach \j in {1,...,\K}{
     \pgfmathparse{int(\i+\j)}\ifnum\pgfmathresult<\MP
       \fill[cornerfill] (\N-\j,\N-\i) rectangle (\N-\j+1,\N-\i+1);
       \draw[cornercol,line width=.2pt] (\N-\j,\N-\i) rectangle (\N-\j+1,\N-\i+1);
     \fi}}
  \draw[cornercol,line width=.7pt,densely dashed] (0,0) rectangle (\K,\K);
  \draw[cornercol,line width=.7pt,densely dashed] (\N-\K,\N-\K) rectangle (\N,\N);
  \draw[decorate,decoration={brace,amplitude=3pt,mirror},cornercol,line width=.5pt]
        (-0.5,0) -- (-0.5,\K) node[midway,left=3pt,scale=.75,text=cornercol]{$k$};
  \draw[{Stealth[length=1.6mm]}-{Stealth[length=1.6mm]},accent!75!black,line width=.5pt]
        (3,7) -- (7,3);
  \node[scale=.72,fill=white,inner sep=1pt,text=accent!75!black] at (5,5) {$2m{+}1$};
  \node[scale=.8,rotate=-45,fill=white,inner sep=1pt,text=accent!75!black]
        at (9.6,9.6) {$T_N(F_m)$};
  \node[scale=.78,anchor=west,text=cornercol,fill=white,inner sep=1pt]
        at (4.6,1.35) {$-H_N(F_m)$};
  \node[scale=.78,anchor=east,text=cornercol,fill=white,inner sep=1pt]
        at (9.4,12.65) {$-J_NH_N(F_m)J_N$};
  \node[scale=.9] at (\N/2,-2.4) {$N\ge 2(m-1)$};
\end{scope}
\begin{scope}[xshift=72mm]
  \def\N{5}\def\M{4}\def\K{3}\def\MP{5}
  \fill[bandcol] (0,0) rectangle (\N,\N);
  \foreach \g in {0,...,5}{
    \draw[gray!30,line width=.15pt] (\g,0) -- (\g,\N);
    \draw[gray!30,line width=.15pt] (0,\g) -- (\N,\g);}
  \draw[black,line width=.7pt] (0,0) rectangle (\N,\N);
  \foreach \i in {1,...,\K}{\foreach \j in {1,...,\K}{
     \pgfmathparse{int(\i+\j)}\ifnum\pgfmathresult<\MP
       \fill[cornerfill] (\j-1,\i-1) rectangle (\j,\i);
       \draw[cornercol,line width=.2pt] (\j-1,\i-1) rectangle (\j,\i);
     \fi}}
  \foreach \i in {1,...,\K}{\foreach \j in {1,...,\K}{
     \pgfmathparse{int(\i+\j)}\ifnum\pgfmathresult<\MP
       \fill[cornerfill,opacity=.6] (\N-\j,\N-\i) rectangle (\N-\j+1,\N-\i+1);
       \draw[cornercol,line width=.2pt] (\N-\j,\N-\i) rectangle (\N-\j+1,\N-\i+1);
     \fi}}
  \draw[cornercol,line width=.7pt,densely dashed] (0,0) rectangle (\K,\K);
  \draw[cornercol,line width=.7pt,densely dashed] (\N-\K,\N-\K) rectangle (\N,\N);
  \node[scale=.9] at (\N/2,-2.4) {$N<2(m-1)$};
  \node[anchor=north,align=center,text=gray!50!black,font=\scriptsize]
       at (\N/2,6.0) {the outlined blocks overlap\\the shaded triangles do not};
\end{scope}
\end{tikzpicture}
\caption{The three terms of \eqref{eq:ToeplitzHankel_intro}, drawn for $m=4$,
so that $k:=m-1=3$.  The shaded band carries the at most $2m+1$ potentially
nonzero diagonals of $T_N(F_m)$, while the two outlined corners carry $-H_N(F_m)$
and $-J_NH_N(F_m)J_N$.  In coordinates measured from the relevant corner,
only entries with $i+j\le m$ can be nonzero, since $c_k=0$ for $|k|>m$,
and they are constant along antidiagonals.  When $N\ge m-1$, as in both panels, each corner fits in a
$k\times k$ block whose dimension does not grow with $N$.  This allows the
determinant ratio \eqref{eq:R_N_intro} to be represented by a
$2k\times2k$ determinant.  Left:
$N=14$.  Right: $N=5$, where $N<2(m-1)$ and the outlined blocks overlap,
while their nonzero triangular parts remain disjoint.}
\label{fig:corners}
\end{figure}

The identity reduces the determinant ratio to a determinant of fixed dimension.
Whenever
\(D_N(F_m)\ne0\), define
\begin{equation}\label{eq:R_N_intro}
R_N:=\frac{\det P(L_N)}{D_N(F_m)}.
\end{equation}
For \(N\ge m-1\), \eqref{eq:ToeplitzHankel_intro} expresses \(R_N\) as a
\(2(m-1)\times2(m-1)\) determinant, even when the two boundary blocks overlap.
Moreover, \(F_m(z)=F_m(z^{-1})\), so \(T_N(F_m)\) is unchanged when its row
and column indices are simultaneously reversed:
\(
J_NT_N(F_m)J_N=T_N(F_m)
\).  The reduced determinant therefore factors into a product of two
\((m-1)\times(m-1)\) determinants, obtained by restricting to the \(+1\) and
\(-1\) eigenspaces of the swap on
\(\bbC^{m-1}\oplus\bbC^{m-1}\).
Perturbations confined to
corners of fixed size, including exact determinant reductions and their limits
as \(N\to\infty\) under stability assumptions, are treated in
\cite[Eqs.~(8)--(10) and Thm.~3.1]{BFGM15}.

The inversion symmetry of the Laurent symbol also leads to a closed
expression in terms of reciprocal roots.
Let
\(
\bbT:=\{z\in\bbC:|z|=1\}
\)
denote the unit circle, and assume that \(z^2+a_hz+1\) has no zero on
\(\bbT\).  Its two roots have product \(1\), so exactly one lies in the open
unit disk.  Denote it by \(\rho_h\).  Equivalently,
\(
a_h=-(\rho_h+\rho_h^{-1})
\).
The change of variables \(x=z+z^{-1}\), sometimes called the
Dickson--Zhukovsky map, has been used for symmetric polynomials in variables
paired with their reciprocals and for minors of banded Toeplitz matrices with
symmetric Laurent symbols
\(G(z)=G(z^{-1})\) \cite{AlexanderssonEtAl21}.  For this family, applying
the strong Szeg\H{o} theorem \cite{Szego52} separately to the numerator and
denominator in \(R_N\) gives, for fixed \(m\),
\begin{equation}\label{eq:root_limit_intro}
R_N\longrightarrow \prod_{1\le i<j\le m}(1-\rho_i\rho_j)^2
\qquad (N\to\infty).
\end{equation}
For symmetric banded Toeplitz matrices, Chebyshev determinant formulas of
fixed order in terms of the reciprocal root pairs of the Laurent symbol are
available in \cite[Thm.~1]{Elouafi14}.  A specialization of the Toeplitz minor
identity in \cite[Eq.~(19)]{GGT20} expresses \(D_N(F_m)\) exactly in terms of
a Schur polynomial indexed by a rectangular partition, without requiring the
roots to be distinct.  For pairwise distinct roots we also use
an exact formula involving an \(m\times m\) determinant built from
\([(1-\rho_i\rho_j)^{-1}]_{i,j=1}^m\).  For this Laurent symbol with inversion
symmetry, it is a specialization of classical identities for rational symbols
\cite{Day75,BasorForrester94} and of the
Borodin--Okounkov--Geronimo--Case identity
\cite{GC79,BO00,BW00,Bot01,Bottcher02}.  Related Fredholm expansions for finite rank
kernels associated with rational symbols, including determinants of Cauchy
matrices of this type, appear in
\cite[Sec.~6]{BasorConrey24}.  The identity involving the Cauchy matrix is
first stated in a form that does not divide by \(D_N(F_m)\), so that it remains
valid when this Toeplitz determinant vanishes.  For repeated roots, the
corresponding matrix representation is obtained by taking a confluent limit.
Closely related confluent Vandermonde formulas for Schur and complete
homogeneous polynomials with repeated variables are developed in
\cite{GonzalezSerranoMaximenko25,GonzalezSerranoMaximenkoCH25}.

For the application, let \(Z_{m,n}\) be the number of tilings of a
\(2m\times2n\) board of unit squares by \(2\times1\) dominoes.  The
Kasteleyn--Temperley--Fisher product and the Toeplitz determinant derived from
the dual Cauchy identity are classical evaluations of \(Z_{m,n}\).  The
former is, up to the sign \((-1)^{mn}\), the determinant
\(\det P(L_{2n})\), obtained by diagonalizing \(L_{2n}\) with the discrete
sine transform.  The new comparison is between this product of finite
tridiagonal sections and the \(2n\times2n\) Toeplitz matrix formed from their
product symbol.  Proposition~\ref{prop:finiteN_root} represents the resulting
determinant ratio by an \(m\times m\) determinant.  More explicitly, put
\[
\xi_h:=\cos\frac{h\pi}{2m+1},
\qquad
q_h:=\sqrt{1+\xi_h^2}-\xi_h
\quad (1\le h\le m).
\]
Then the tiling specialization is \(a_h=2\ii\xi_h\), the root in the open
unit disk is \(\rho_h=\ii q_h\), and, for fixed \(m\), the normalized ratio
\((-1)^{mn}Z_{m,n}/D_{2n}(F_m)\) tends to
\(\prod_{i<j}(1+q_iq_j)^2>0\), with the first exponentially small correction
given explicitly.  The precise statement is
Theorem~\ref{thm:domino_root_correction}.  The derivation from the dual Cauchy identity also connects
this calculation with the
general relation between Toeplitz minors---determinants of selected row and
column submatrices of a Toeplitz matrix---and skew Schur specializations
\cite{BD02,GGT20}, although the result proved here concerns ordinary
determinants.

\paragraph{Scope and contribution.}
The purpose of the paper is to work out explicitly the consequences of these
classical identities for the tridiagonal family with inversion symmetry and
its specialization to domino tilings.  The boundary, Schur, and root formulas
give complementary descriptions of the determinant comparison.  In the
domino specialization, the correction factor involves a determinant of order
\(m\), independent of \(n\), and its first exponentially small term is given in
\eqref{eq:domino_root_first}.

Section~\ref{sec:corner} derives the case of two factors from the product
formula for finite sections, proves the general identity for banded symbols by the method
of images, and then recovers it by iterating that formula.
Section~\ref{sec:determinants} derives the reduction to a determinant of fixed
dimension, its factorization under reversal,
and the exact root formula for finite \(N\).  Section~\ref{sec:domino} gives the
rectangular domino specialization.  Section~\ref{sec:outlook} discusses
growing bandwidths and Toeplitz minors.

\section{The Toeplitz identity with Hankel corrections}
\label{sec:corner}

We now prove the identity for finite bandwidth stated in the introduction.  In the
conventions of
\eqref{eq:TN_definition_intro}--\eqref{eq:HN_JN_definition_intro}, a Laurent
series \(G(z)=\sum_{k\in\mathbb Z}g_kz^k\) satisfies
\[
T_N(G)=[g_{i-j}]_{i,j=1}^N,
\qquad
D_N(G):=\det T_N(G),
\qquad
H_N(G)=[g_{i+j}]_{i,j=1}^N .
\]
The shift \(i+j\), rather than \(i+j-1\), is deliberate: it produces the
term at the upper left in \eqref{eq:ToeplitzHankel_intro}, while conjugation by
\(J_N\) produces the term at the lower right.
Throughout, the superscript $\top$ denotes ordinary rather than conjugate
transpose, even though the parameters and matrices need not be real.

For \(a\in\bbC\), the matrix \(T_N(z+z^{-1}+a)\) is the tridiagonal
Toeplitz matrix with diagonal entries \(a\), first upper and lower diagonals
equal to \(1\), and all other entries zero.  Equivalently,
\[
T_N(z+z^{-1}+a)=L_N+aI_N,
\qquad L_N:=T_N(z+z^{-1}).
\]
Because every matrix \(L_N+aI_N\) is a polynomial in \(L_N\), all matrices
in this family commute.  The walk expansion below naturally produces the
reverse product \(A_m\cdots A_1\), and commutativity allows us to write it as
\(A_1\cdots A_m\).

If \(v\in\bbC^N\) is extended by \(v_0=v_{N+1}=0\), then
\[
(L_Nv)_i=v_{i-1}+v_{i+1},
\qquad 1\le i\le N.
\]
These two zero endpoint values are what is meant by Dirichlet boundary
conditions in this discrete setting.

The identity compares a product of \(m\) tridiagonal Toeplitz matrices with
the Toeplitz matrix whose symbol is the product of their symbols.  For two
factors it follows directly from the product formula for finite sections.  We
give that derivation after the entrywise calculation below.  For an arbitrary
polynomial, a walk expansion makes the full image sum and the degree cutoff
transparent.  The discrepancy consists of walks that leave
\(\{1,\dots,N\}\), and the method of images removes them by subtracting
reflected kernels on the full integer line.

\paragraph{The case of two factors.}
Let $f_a(z)=z+z^{-1}+a$ and $f_b(z)=z+z^{-1}+b$, and set $F_2(z)=f_a(z)f_b(z)$.
Then \(T_N(f_a)\) and \(T_N(f_b)\) are tridiagonal Toeplitz matrices, while
\(T_N(F_2)\) can have nonzero entries only when \(|i-j|\le2\).
For \(N\ge2\), the next proposition shows that
\(T_N(f_a)T_N(f_b)\) and \(T_N(F_2)\) coincide except at the first and last
diagonal entries.  It also includes the scalar case \(N=1\).

\begin{proposition}[The identity for two factors]\label{prop:m2_corner}
For every $N\ge 1$ and $a,b\in\bbC$,
\begin{equation}\label{eq:m2_corner}
T_N(f_a)\,T_N(f_b)=T_N(F_2)-e_1e_1^\top-e_Ne_N^\top,
\end{equation}
where $e_1,e_N$ are the first and last standard basis vectors in $\bbC^N$.
Equivalently,
\begin{equation}\label{eq:m2_corner_hankel}
T_N(f_a)\,T_N(f_b)=T_N(F_2)-H_N(F_2)-J_NH_N(F_2)J_N,
\end{equation}
since $H_N(F_2)=e_1e_1^\top$ for all $N\ge 1$.
\end{proposition}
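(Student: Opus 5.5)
The plan is a direct entrywise comparison. First expand the product symbol,
\[
F_2(z)=(z+z^{-1}+a)(z+z^{-1}+b)=z^2+(a+b)z+(ab+2)+(a+b)z^{-1}+z^{-2},
\]
so its coefficients are \(c_0=ab+2\), \(c_{\pm1}=a+b\), \(c_{\pm2}=1\), and \(c_k=0\) for \(|k|>2\). With \(f_a\) having coefficients \(1,a,1\) at \(k=-1,0,1\), and similarly for \(f_b\), we compare the two entrywise formulas already written in the introduction:
\[
\bigl(T_N(f_a)T_N(f_b)\bigr)_{ij}=\sum_{\ell=1}^N (f_a)_{i-\ell}(f_b)_{\ell-j},
\qquad
\bigl(T_N(F_2)\bigr)_{ij}=\sum_{\ell\in\mathbb Z}(f_a)_{i-\ell}(f_b)_{\ell-j}.
\]
Their difference is the sum over \(\ell\notin\{1,\dots,N\}\). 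A term is nonzero only if \(|i-\ell|\le1\) and \(|\ell-j|\le1\). Since \(1\le i,j\le N\), the only admissible outside indices are \(\ell=0\), which forces \(i=j=1\), and \(\ell=N+1\), which forces \(i=j=N\).

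Next we evaluate these two terms. The \(\ell=0\) term is \((f_a)_{1}(f_b)_{-1}=1\), and the \(\ell=N+1\) term is \((f_a)_{-1}(f_b)_{1}=1\). Hence
\[
T_N(F_2)-T_N(f_a)T_N(f_b)=e_1e_1^\top+e_Ne_N^\top .
\]
When \(N=1\) the two missing terms fall on the same entry \((1,1)\), and \(e_1=e_N\), so the correction is \(2e_1e_1^\top\). As a check, \((a)(b)=ab+2-2\), which is consistent. The statement as written already covers this case, since the sum \(e_1e_1^\top+e_Ne_N^\top\) then equals \(2e_1e_1^\top\). This is the only point needing care. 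Throughout we should argue term by term over \(\ell\) rather than over the rank-one pieces, so that coinciding corners are handled automatically.

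For the Hankel form, note that \(H_N(F_2)_{ij}=c_{i+j}\). Since \(i+j\ge2\), the only nonzero entry is at \(i=j=1\), where it equals \(c_2=1\). Thus \(H_N(F_2)=e_1e_1^\top\) for every \(N\ge1\). Finally, \(J_Ne_1=e_N\) gives \(J_NH_N(F_2)J_N=e_Ne_N^\top\), which yields \eqref{eq:m2_corner_hankel} from \eqref{eq:m2_corner}.

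There is no real obstacle beyond this index bookkeeping, including the degenerate case \(N=1\). Alternatively, one could cite Widom's formula \(T(fg)=T(f)T(g)+H(f)H(\tilde g)\) in the conventions of \cite[Prop.~3.10]{BG05} and check that each Hankel product is \(e_1e_1^\top\). However, the direct computation avoids any ambiguity about conventions.
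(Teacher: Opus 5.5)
Your proposal is correct and is essentially the paper's argument. The omitted intermediate indices $\ell=0$ and $\ell=N+1$ are exactly the entries of the two corrections in the paper's second proof via Widom's formula, and the result is equivalent to the interior versus corner diagonal comparison in its first proof. Your treatment of the coinciding corners at $N=1$ and of $H_N(F_2)=e_1e_1^\top$ also matches the paper's.
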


\begin{proof}
A direct computation shows that the product $T_N(f_a)T_N(f_b)$ has bandwidth~$2$ and, whenever the indices stay in range,
\[
(T_N(f_a)T_N(f_b))_{i,i\pm2}=1,\qquad (T_N(f_a)T_N(f_b))_{i,i\pm1}=a+b.
\]
For $N\ge2$, the diagonal entries are
\[
(T_N(f_a)T_N(f_b))_{i,i}=ab+2\quad (2\le i\le N-1),
\qquad
(T_N(f_a)T_N(f_b))_{1,1}=(T_N(f_a)T_N(f_b))_{N,N}=ab+1,
\]
the boundary cases having one fewer neighbor.
On the other hand,
\[
F_2(z)=f_a(z)f_b(z)=z^2+(a+b)z+(ab+2)+(a+b)z^{-1}+z^{-2},
\]
so $T_N(F_2)$ has the same five diagonals with diagonal coefficient $ab+2$.
Therefore \(T_N(f_a)T_N(f_b)\) and \(T_N(F_2)\) coincide except that the
\((1,1)\) and \((N,N)\) entries are smaller by \(1\).  This proves
\eqref{eq:m2_corner} when \(N\ge2\).  If \(N=1\), the product is the scalar
\(ab\), while \(T_1(F_2)=ab+2\).  The two corner corrections coincide and
the same identity follows.

For \eqref{eq:m2_corner_hankel}, the only Laurent coefficient of \(F_2\)
that can appear in \(H_N(F_2)=[c_{i+j}]_{i,j=1}^N\) is \(c_2=1\), because
\(i+j\ge2\) and \(c_k=0\) for \(k>2\).  Hence
\(H_N(F_2)=e_1e_1^\top\) and
\(J_NH_N(F_2)J_N=e_Ne_N^\top\).
\end{proof}

\begin{proof}[Second proof]
We use the formula for finite sections.  For a Laurent polynomial \(q\), write
\(q(z)=\sum_{k\in\mathbb Z}q_kz^k\).  Define the two semi-infinite Hankel
matrices
\[
\mathcal H_+(q):=[q_{r+s-1}]_{r,s\ge1},
\qquad
\mathcal H_-(q):=[q_{-(r+s-1)}]_{r,s\ge1}.
\]
Let \(\Pi_N\) be the orthogonal projection onto the first \(N\) coordinates,
and identify its range with \(\bbC^N\).  In the coefficient convention
\(T_N(q)=[q_{i-j}]_{i,j=1}^N\), Widom's product formula for finite sections
\cite[Eq.~(1.4)]{Widom76} reads
\begin{equation}\label{eq:widom_finite_section}
\begin{split}
T_N(uv)={}&T_N(u)T_N(v)
 +\Pi_N\mathcal H_+(u)\mathcal H_-(v)\Pi_N\\
&+J_N\bigl(\Pi_N\mathcal H_-(u)\mathcal H_+(v)\Pi_N\bigr)J_N .
\end{split}
\end{equation}
This is also Proposition~3.10 of \cite{BG05}.  Indeed, the entry in position
\((i,j)\) of
the first correction is
\(\sum_{\ell\le0}u_{i-\ell}v_{\ell-j}\), and the reversed correction gives the
corresponding sum over \(\ell\ge N+1\).  All the symbols here are Laurent
polynomials, so the products in \eqref{eq:widom_finite_section} have only
finitely many nonzero terms.  Specialize now to \(u=f_a\) and \(v=f_b\).
These symbols are invariant under \(z\mapsto z^{-1}\), and
\[
\mathcal H_+(f_a)=\mathcal H_-(f_a)
=\mathcal H_+(f_b)=\mathcal H_-(f_b)
=\varepsilon_1\varepsilon_1^\top,
\]
where \(\varepsilon_1\) is the first standard basis vector of
\(\ell^2(\mathbb Z_{\ge1})\).
The two correction terms in \eqref{eq:widom_finite_section} are therefore
\(e_1e_1^\top\) and \(e_Ne_N^\top\).  Rearranging gives
\eqref{eq:m2_corner}.  When \(N=1\), the two compressed corrections coincide
and both must be retained, giving the scalar correction \(2\).
\end{proof}

\paragraph{The case of three factors.}
The next case displays the nontrivial Hankel pattern.  Write
$P(x)=(x+a_1)(x+a_2)(x+a_3)$ and
$s_1=a_1+a_2+a_3$.  Since the positive coefficients of
$P(z+z^{-1})$ at degrees $2$ and $3$ are $s_1$ and $1$, respectively,
for $N\ge2$ one has
\[
H_N(P(z+z^{-1}))
=E_{+,2}
\begin{pmatrix}
s_1&1\\
1&0
\end{pmatrix}
E_{+,2}^\top,
\qquad
E_{+,2}:=[e_1\ e_2]\in\bbC^{N\times2},
\]
where \(E_{+,2}\) embeds \(\bbC^2\) into the first two coordinates of
\(\bbC^N\).
Thus, when \(m=3\), each boundary correction is supported in a
\(2\times2\) Hankel block.

For general \(m\), Theorem~\ref{thm:ToeplitzHankelCorners} gives the same
structure.  When \(N\ge m-1\), the two corner blocks have size
\((m-1)\times(m-1)\) and are determined by the Laurent coefficients of
\(F_m\).

\paragraph{Weighted walks and the method of images.}
Define the Laurent polynomial
\begin{equation}\label{eq:Fm_def}
F_m(z)=\prod_{h=1}^{m}\big(z+z^{-1}+a_h\big)=\sum_{k=-m}^{m}c_k z^k,
\qquad c_{-k}=c_k,\qquad c_k=0\ (|k|>m).
\end{equation}
The coefficients $c_k$ have a direct path interpretation:
\begin{equation}\label{eq:ck_paths}
c_k=\!\!\sum_{\substack{\varepsilon_1,\dots,\varepsilon_m\in\{-1,0,1\}\\ \varepsilon_1+\cdots+\varepsilon_m=k}}
\ \prod_{t=1}^{m} w_t(\varepsilon_t),
\qquad w_t(\pm 1)=1,\ \ w_t(0)=a_t.
\end{equation}

\begin{proposition}[Weighted walk representation of the product]\label{prop:walk_product}
Let \(m,N\ge1\) and \(a_1,\dots,a_m\in\bbC\).  For \(t=1,\dots,m\), set
\[
A_t:=T_N(z+z^{-1}+a_t),
\qquad
w_t(\pm1):=1,\quad w_t(0):=a_t.
\]
Then for every $i,j\in\{1,\dots,N\}$,
\begin{equation}\label{eq:walk_entry}
(A_1\cdots A_m)_{i,j}=\sum_{\substack{x_0=j,\dots,x_m=i\\ x_t\in\{1,\dots,N\}\\ x_t-x_{t-1}\in\{-1,0,1\}}}\ \prod_{t=1}^{m} w_t(x_t-x_{t-1}),
\end{equation}
i.e.\ $(A_1\cdots A_m)_{i,j}$ is the total weight of walks of length $m$ on the segment $\{1,\dots,N\}$ with step set $\{-1,0,1\}$ and weights $w_t$.
\end{proposition}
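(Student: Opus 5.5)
The plan is to expand the matrix product entrywise and read each term of the resulting sum as a walk on \(\{1,\dots,N\}\).

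First, I would record the entries of each factor. By the discussion preceding \Cref{prop:m2_corner}, \(A_t=L_N+a_tI_N\). For \(x,y\in\{1,\dots,N\}\) this gives
\[
(A_t)_{x,y}=\begin{cases}1,&|x-y|=1,\\ a_t,&x=y,\\ 0,&|x-y|\ge2.\end{cases}
\]
Equivalently, \((A_t)_{x,y}=w_t(x-y)\) whenever \(x-y\in\{-1,0,1\}\), and the entry is zero otherwise. No entries are lost at the boundary here, because both indices already lie in \(\{1,\dots,N\}\).

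Next, I would write out the matrix product with summation indices named after positions along a walk. Set \(x_m=i\) and \(x_0=j\), and let the intermediate indices \(x_{m-1},\dots,x_1\) range over \(\{1,\dots,N\}\). Then
\[
(A_m A_{m-1}\cdots A_1)_{i,j}=\sum_{x_1,\dots,x_{m-1}=1}^{N}\ \prod_{t=1}^{m}(A_t)_{x_t,x_{t-1}}.
\]
Each summand vanishes unless every step \(x_t-x_{t-1}\) lies in \(\{-1,0,1\}\). When all steps are admissible, the summand equals \(\prod_{t}w_t(x_t-x_{t-1})\). This is exactly the right-hand side of \eqref{eq:walk_entry}.

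The only point requiring care is the order of the factors. The natural expansion produces \(A_m\cdots A_1\), so that step \(t\) uses the weight \(w_t\), whereas the statement concerns \(A_1\cdots A_m\). Every \(A_t\) is a polynomial in \(L_N\), so the factors commute and the two products coincide, as already noted in the text. Alternatively, each \(A_t\) is symmetric, so one could transpose the product and reverse the walk; this trades \((i,j)\) for \((j,i)\) and reverses the step order, but the commutativity argument is cleaner.

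Finally, the case \(m=1\) has no intermediate sum and reduces directly to the description of the entries of \(A_1\). I expect no genuine obstacle. The main thing to get right is the bookkeeping that matches step \(t\) to weight \(w_t\) and to the starting point \(x_0=j\).
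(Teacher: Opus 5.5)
Your proposal is correct and is essentially the paper's proof: expand $(A_m\cdots A_1)_{i,j}$ over intermediate indices $x_1,\dots,x_{m-1}\in\{1,\dots,N\}$, read each nonzero factor $(A_t)_{x_t,x_{t-1}}$ as the step weight $w_t(x_t-x_{t-1})$, and use that the matrices $A_t=L_N+a_tI_N$ commute to replace $A_m\cdots A_1$ by $A_1\cdots A_m$. The transposition alternative you mention does not avoid commutativity. Reversing the walk applies the weights in the order $w_m,\dots,w_1$, and restoring the stated order is exactly the commutativity you invoke, so that step is required, not merely cleaner.
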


\begin{proof}
The displayed path sum is the direct expansion of $(A_m\cdots A_1)_{i,j}$:
\[
(A_m\cdots A_1)_{i,j}
=\sum_{x_1,\dots,x_{m-1}}(A_1)_{x_1,j}(A_2)_{x_2,x_1}\cdots (A_m)_{i,x_{m-1}}.
\]
The entry \((A_t)_{p,q}\) is \(1\) if \(p-q=\pm1\), is \(a_t\) if \(p=q\),
and is \(0\) otherwise.  Each summand is therefore the weight of a walk with
increments in \(\{-1,0,1\}\), and the condition
\(x_t\in\{1,\dots,N\}\) keeps the walk inside the segment.
Finally, $A_t=L_N+a_tI_N$, so the $A_t$ commute pairwise.  Hence
$A_m\cdots A_1=A_1\cdots A_m$.
\end{proof}

\smallskip
\noindent
If the walks in \eqref{eq:walk_entry} were allowed on the full integer line
\(\mathbb Z\), translation invariance would give the Toeplitz kernel
\(c_{i-j}\).  Restricting them to \(\{1,\dots,N\}\) removes paths that cross
either endpoint.  The following reflection formula computes the resulting
correction as an alternating sum of Laurent coefficients.  Such reflection
arguments for constrained lattice walks go back to the classical reflection
principle, in the form used for walks in Weyl chambers
\cite{GesselZeilberger92}.  \Cref{fig:images} illustrates the cancellation.

\begin{proposition}[Method of images for symmetric walks with weights depending on time]\label{prop:images}
With \(A_t\) as in Proposition~\ref{prop:walk_product} and \(c_k\) as in
\eqref{eq:Fm_def}, for every $m,N\ge 1$ and $i,j\in\{1,\dots,N\}$,
\begin{equation}\label{eq:images_formula}
(A_1\cdots A_m)_{i,j}
=\sum_{r\in\mathbb Z}\Big(c_{\,i-j+2r(N+1)}-c_{\,i+j+2r(N+1)}\Big),
\end{equation}
where $c_k$ are the coefficients of $F_m$ in \eqref{eq:Fm_def}.  The sum is
finite because $c_k=0$ for $|k|>m$.
\end{proposition}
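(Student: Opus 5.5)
Starting from Proposition~\ref{prop:walk_product}, I will write \((A_1\cdots A_m)_{i,j}\) as the weighted count of walks \(x_0=j,\dots,x_m=i\) that stay in \(\{1,\dots,N\}\). I then lift to walks on all of \(\mathbb Z\) with the same step weights \(w_t\). By \eqref{eq:ck_paths}, the total weight of unconstrained walks from \(y\) to \(x\) is \(c_{x-y}\). Call this the free kernel \(K(x,y)\); it is translation invariant. It is also symmetric under step reversal, because \(w_t(1)=w_t(-1)\), and therefore \(c_{-k}=c_k\).

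The key tool is the group \(G\) generated by the two reflections \(\sigma_0(x)=-x\) and \(\sigma_{N+1}(x)=2(N+1)-x\). The \(G\)-images of a point \(j\) are
\[
j+2r(N+1)\quad\text{(even elements)},\qquad -j+2r(N+1)\quad\text{(odd elements)},\qquad r\in\mathbb Z .
\]
The claim is that the walk count inside the segment equals the alternating sum \(\sum_{g\in G}\operatorname{sgn}(g)\,K(i,g(j))\). This sum is \(\sum_r\bigl(c_{i-j-2r(N+1)}-c_{i+j-2r(N+1)}\bigr)\), and reindexing \(r\to-r\) gives \eqref{eq:images_formula}. It is finite since \(c_k=0\) for \(|k|>m\).

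To prove the claim I will use the standard sign-reversing involution. Consider all pairs \((g,\text{walk from }g(j)\text{ to }i)\) on \(\mathbb Z\), weighted by \(\operatorname{sgn}(g)\) times the walk weight. A walk that never touches the walls \(\{0,N+1\}\) and ends at \(i\in\{1,\dots,N\}\) must start in the same chamber \(\{1,\dots,N\}\), because steps have size at most \(1\) and so a walk cannot jump over a wall. The starting point \(g(j)\) lies in that chamber only when \(g=\mathrm{id}\). Hence these walks contribute exactly the constrained count.

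For a walk that does touch a wall, let \(\tau\) be the first time \(x_\tau\in\{0,N+1\}\) and let \(\sigma\) be the reflection in that wall. Reflect the portion \(x_0,\dots,x_\tau\). This produces a walk from \(\sigma g(j)\) to \(i\), and \(\sigma g\) has opposite sign. The weight is unchanged because reflection maps step \(\varepsilon\) to \(-\varepsilon\) and \(w_t(\varepsilon)=w_t(-\varepsilon)\); the time-dependence of \(w_t\) is harmless since reflection preserves the times of the steps. The first wall hit and its time are also unchanged, since reflecting the prefix maps points strictly between walls to points strictly between walls of the reflected chamber. I still need to justify that no earlier hit of \(\{0,N+1\}\) is created. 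This holds because the images of the walls under \(G\) are exactly \((N+1)\mathbb Z\): the walk before \(\tau\) avoids all of \((N+1)\mathbb Z\), so it stays in an open chamber between consecutive multiples of \(N+1\).

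That last point is the subtle one. I should therefore define \(\tau\) as the first hitting time of the full wall set \((N+1)\mathbb Z\), and take \(\sigma\) to be the reflection in the specific wall hit, which lies in \(G\). With that definition the map is an involution: same \(\tau\), same wall, and applying it twice gives the identity. It pairs terms of opposite sign, so everything cancels except the unreflected walks. Those are the walks that never meet \((N+1)\mathbb Z\) and start at \(g(j)\) in the chamber of \(i\), which forces \(g=\mathrm{id}\).

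The main obstacle is this careful choice of the wall set and the verification that the involution is well defined, including the case \(N=1\) and walks that start on no wall. The starting point \(g(j)\) is never a wall, since \(j\not\equiv0\pmod{N+1}\). Everything else is bookkeeping with \eqref{eq:ck_paths}.
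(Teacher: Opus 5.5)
Your proof is correct, but it takes a different route from the paper's.

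\textbf{The paper's route.} The paper never builds a bijection. For each $t$ it defines the image sum $\widetilde K_t(i,j)$ for $i\in\{0,\dots,N+1\}$ and checks three things:
\begin{itemize}
\item it vanishes at $i=0$ and $i=N+1$, using $c^{(t)}_{-k}=c^{(t)}_k$ and the sum over all $r$;
\item it obeys the one-step recursion inherited termwise from $c_k^{(t)}=c_{k-1}^{(t-1)}+a_tc_k^{(t-1)}+c_{k+1}^{(t-1)}$;
\item it starts from $\widetilde K_0(i,j)=\delta_{ij}$.
\end{itemize}
Left multiplication by $A_t$ is exactly this recursion with Dirichlet data. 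Induction therefore identifies $\widetilde K_t(\cdot,j)$ with the $j$th column of $A_t\cdots A_1$.

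\textbf{Comparison.} The paper's argument is shorter and purely algebraic. It does not use the walk representation of Proposition~\ref{prop:walk_product}, and it needs no stopping-time bookkeeping.

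Your sign-reversing involution works instead on pairs $(g,\text{free walk from }g(j))$, with $g$ in the infinite dihedral group generated by the reflections in $0$ and $N+1$. It proves the identity walk by walk. This makes rigorous the single-reflection heuristic that the paper only illustrates in Figure~\ref{fig:images}. It also shows where every image term comes from, including those with $r\neq 0,-1$, which arise from walks meeting several walls. This is the form of the argument that carries over to other reflection-group settings, such as several non-intersecting walkers in a chamber.

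Your choice to let $\tau$ be the first hitting time of the whole lattice $(N+1)\mathbb Z$ is right and necessary. Suppose you used only $\{0,N+1\}$. A walk from $-j$ that visits $-(N+1)$ before reaching $0$ would, after reflection in $0$, hit $N+1$ at an earlier time. The map would then not be an involution.

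Finiteness is automatic: only the finitely many $g$ with $|g(j)-i|\le m$ contribute, so the involution acts on a finite set.
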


\begin{proof}
Let \(\ell:=N+1\) and let
\(F_t(z):=\prod_{s=1}^{t}(z+z^{-1}+a_s)=\sum_k c_k^{(t)}z^k\), so
\(c_k^{(m)}=c_k\).
Multiplying by $(z+z^{-1}+a_t)$ shows the coefficients satisfy the recursion
\begin{equation}\label{eq:coeff_rec}
c_k^{(t)}=c_{k-1}^{(t-1)}+a_t\,c_k^{(t-1)}+c_{k+1}^{(t-1)},
\qquad c_k^{(0)}=\delta_{k,0}.
\end{equation}
Here \(\delta_{k,0}\) equals \(1\) when \(k=0\) and \(0\) otherwise.

Define the ``free'' kernel on the integer line by $K_t^{\mathrm{free}}(x,y):=c_{x-y}^{(t)}$ for $x,y\in\mathbb Z$.
By \eqref{eq:coeff_rec}, it solves the discrete evolution
\[
K_t^{\mathrm{free}}(x,y)=K_{t-1}^{\mathrm{free}}(x-1,y)+a_t K_{t-1}^{\mathrm{free}}(x,y)+K_{t-1}^{\mathrm{free}}(x+1,y).
\]

Now define, for \(i\in\{0,1,\dots,\ell\}\), the kernel obtained by summing over images
\begin{equation}\label{eq:Ktilde}
\widetilde K_t(i,j):=\sum_{r\in\mathbb Z}\Big(c_{\,i-j+2r\ell}^{(t)}-c_{\,i+j+2r\ell}^{(t)}\Big),
\qquad j\in\{1,\dots,N\}.
\end{equation}
The sum is finite for each fixed $t$ because $c_k^{(t)}=0$ for $|k|>t$.

\smallskip
\noindent\emph{(i) Boundary conditions.}
Using the palindromic symmetry $c_{-k}^{(t)}=c_k^{(t)}$ and the fact that the sum runs over all $r\in\mathbb Z$,
\[
\widetilde K_t(0,j)=\sum_{r}\big(c_{-j+2r\ell}^{(t)}-c_{j+2r\ell}^{(t)}\big)=0,
\qquad
\widetilde K_t(\ell,j)=\sum_{r}\big(c_{\ell-j+2r\ell}^{(t)}-c_{\ell+j+2r\ell}^{(t)}\big)=0,
\]
(the second identity follows by applying $c_{-k}^{(t)}=c_k^{(t)}$ and then changing variables $r\mapsto -r-1$ in the first sum).

\smallskip
\noindent\emph{(ii) Evolution in $t$.}
For $i\in\{1,\dots,N\}$, applying \eqref{eq:coeff_rec} termwise in \eqref{eq:Ktilde} gives
\[
\widetilde K_t(i,j)=\widetilde K_{t-1}(i-1,j)+a_t\,\widetilde K_{t-1}(i,j)+\widetilde K_{t-1}(i+1,j),
\]
with boundary values
\(\widetilde K_t(0,j)=\widetilde K_t(\ell,j)=0\) from (i).  At \(t=0\),
only the term \(r=0\) can contribute to \eqref{eq:Ktilde}, and hence
\(\widetilde K_0(i,j)=\delta_{i,j}\) for \(1\le i,j\le N\).

\smallskip
\noindent\emph{(iii) Identification with the matrix product.}
For fixed \(j\), the vector
\(v_t(i):=(A_t\cdots A_1)_{i,j}\), \(1\le i\le N\), satisfies the same
recursion in \(t\), with the same boundary and initial data.  This recursion is
left multiplication by \(A_t\).  Induction gives
\(v_t(i)=\widetilde K_t(i,j)\).  Since the matrices
\(A_t=L_N+a_tI_N\) commute pairwise,
\(A_m\cdots A_1=A_1\cdots A_m\), and \eqref{eq:images_formula} follows at
\(t=m\).
\end{proof}

\begin{figure}[H]
\centering
\begin{tikzpicture}[x=12mm,y=5.2mm,line join=round,line cap=round]
  \def\T{8}
  \fill[gray!7] (-0.7,-3.4) rectangle (\T+0.55,0);
  \foreach \t in {0,...,8}{\draw[gray!35,line width=.15pt] (\t,-3.4) -- (\t,7.35);}
  \draw[black,line width=.7pt] (-0.7,0) -- (\T+0.55,0);
  \draw[black,line width=.7pt] (-0.7,7) -- (\T+0.55,7);
  \node[scale=.8,anchor=south east] at (\T+0.5,0.1) {site $0$};
  \node[scale=.8,anchor=north east] at (\T+0.5,6.9) {site $\ell=N+1$};
  \foreach \t in {0,2,4,6,8}{\node[scale=.72,text=gray!50!black] at (\t,-4.15) {$\t$};}
  \node[scale=.75,text=gray!50!black] at (4,-5.15) {step $t$};
  \draw[accent,line width=1.1pt]
      plot coordinates {(0,2)(1,3)(2,3)(3,4)(4,5)(5,4)(6,4)(7,3)(8,3)};
  \foreach \t/\ya/\yb in {0/2/-2,1/1/-1,2/1/-1}{
     \draw[gray!60,line width=.4pt,dotted] (\t,\ya) -- (\t,\yb);}
  \draw[imgcol,line width=1.1pt,densely dashed]
      plot coordinates {(0,-2)(1,-1)(2,-1)(3,0)};
  \draw[cornercol,line width=1.1pt]
      plot coordinates {(0,2)(1,1)(2,1)(3,0)};
  \draw[cornercol!55!imgcol,line width=1.5pt]
      plot coordinates {(3,0)(4,1)(5,2)(6,2)(7,3)(8,3)};
  \fill[accent] (0,2) circle (2.2pt);
  \node[scale=.85,text=accent,left=2pt] at (0,2) {$j$};
  \fill[imgcol] (0,-2) circle (2.2pt);
  \node[scale=.85,text=imgcol,left=2pt] at (0,-2) {$-j$};
  \fill[accent] (8,3) circle (2.2pt);
  \node[scale=.85,text=accent,right=2pt] at (8,3) {$i$};
  \draw[cornercol,fill=white,line width=.7pt] (3,0) circle (2.2pt);
  \node[scale=.78,text=accent,anchor=south] at (4.7,5.15)
       {stays in $\{1,\dots,N\}$: counted};
  \node[scale=.78,text=cornercol,anchor=south west] at (0.85,1.15)
       {touches the wall: removed};
  \node[scale=.78,text=imgcol,anchor=north west] at (0.15,-2.15)
       {image walk, same weight};
  \node[scale=.75,text=cornercol!55!imgcol,anchor=north] at (6.2,1.75)
       {common continuation};
\end{tikzpicture}

\smallskip
{\small$\displaystyle
 (A_1\cdots A_m)_{i,j}=
 \underbrace{c_{i-j}}_{\text{free walks}}
 \;-\;\underbrace{c_{i+j}}_{\text{image in }0}
 \;-\;\underbrace{c_{2(N+1)-i-j}}_{\text{image in }\ell}$}

\caption{The method of images behind Proposition~\ref{prop:images} and
\Cref{thm:ToeplitzHankelCorners}.  Steps lie in $\{-1,0,1\}$ and
carry the weights $w_t(\pm1)=1$, $w_t(0)=a_t$.  The walk in blue never leaves
$\{1,\dots,N\}$ and is counted by the matrix product.  The walk in red starts
at $j$ but touches the wall at site $0$, so it must be removed.  Reflecting its
initial segment in that wall (dotted) matches it bijectively, and with equal
weight, to a walk started at the image site $-j$.  Subtracting all walks from
$-j$ therefore removes exactly the offending walks, and contributes the Hankel
kernel $c_{i+j}$.  The wall at $\ell=N+1$ contributes $c_{2(N+1)-i-j}$ in the
same way.  All other terms in \eqref{eq:images_formula} have coefficient indices
of absolute value at least $N+3$.  They therefore vanish when $m\le N+2$,
leaving the three displayed terms.}
\label{fig:images}
\end{figure}

\paragraph{From images to the matrix identity.}
The following identity with Hankel corrections is stated for an
arbitrary polynomial.  The factorized symbol used later is its monic special
case.

\begin{theorem}[Toeplitz identity with Hankel corrections]\label{thm:ToeplitzHankelCorners}
Let \(N\ge1\), let $P\in\bbC[x]$ have degree $d\ge1$, and set
\[
G(z):=P(z+z^{-1}),
\qquad L_N:=T_N(z+z^{-1}),
\]
and assume $d\le N+2$.  Then
\begin{equation}\label{eq:ToeplitzHankelIdentity}
P(L_N)=T_N(G)-H_N(G)-J_NH_N(G)J_N.
\end{equation}
If $N\ge d-1$, the two Hankel terms are supported in the upper left and
lower right $(d-1)\times(d-1)$ corners, respectively.

In particular, taking $P(x)=\prod_{h=1}^m(x+a_h)$ gives
\[
\prod_{h=1}^{m}T_N(z+z^{-1}+a_h)
=T_N(F_m)-H_N(F_m)-J_NH_N(F_m)J_N .
\]
\end{theorem}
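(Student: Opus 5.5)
The plan is to deduce the identity from Proposition~\ref{prop:images}, first passing from monic factorized polynomials to arbitrary $P$ by linearity. Both sides of \eqref{eq:ToeplitzHankelIdentity} are linear in $P$: the left side through polynomial evaluation $P\mapsto P(L_N)$, and the right side because $P\mapsto G=P(z+z^{-1})$ is linear and $T_N,H_N$ are linear in the symbol. It therefore suffices to treat a spanning set of polynomials of degree at most $d$. For this we use the monomials $x^k$ with $0\le k\le d$. Each $x^k$ is of the form $\prod_{h=1}^k(x+a_h)$ with all $a_h=0$, and for $k=0$ the empty product gives $P=1$, $G=1$. In that case $H_N(1)=0$, because $i+j\ge2$, and the identity reads $I_N=T_N(1)$. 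So it is enough to prove the factorized statement for every $m\le d$, and then the hypothesis $m\le d\le N+2$ holds for each term.

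For the factorized case, fix $m\le N+2$ and $i,j\in\{1,\dots,N\}$, and expand the image sum \eqref{eq:images_formula}. Write $\ell=N+1$. The terms we keep are $r=0$ in the first sum, which gives $c_{i-j}$; $r=0$ in the second sum, which gives $c_{i+j}$; and $r=-1$ in the second sum, which gives $c_{i+j-2\ell}=c_{2\ell-i-j}$ by symmetry. These are exactly the $(i,j)$ entries of $T_N(F_m)$, $H_N(F_m)$ and $J_NH_N(F_m)J_N$. The last identification uses $(J_NH_NJ_N)_{ij}=c_{(N+1-i)+(N+1-j)}$.

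The main step is to check that every other index has absolute value at least $N+3$, so that $c_k=0$ there because $m\le N+2$. In the first sum with $r\ne0$, we have $|i-j|\le N-1$, so $|i-j+2r\ell|\ge 2\ell-(N-1)=N+3$. In the second sum, $2\le i+j\le 2N$. For $r\ge1$ this gives $i+j+2r\ell\ge 2N+4$. For $r\le-2$ it gives $i+j+2r\ell\le 2N-4\ell=-2N-4$. Both bounds comfortably exceed $N+3$ in absolute value. This bookkeeping is the only delicate point, and the figure caption's claim that all remaining indices are at least $N+3$ confirms it. Assembling the kept terms entrywise gives \eqref{eq:ToeplitzHankelIdentity}.

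For the support claim, $c_{i+j}$ can be nonzero only when $i+j\le d$. Since $j\ge1$, this forces $i\le d-1$, and likewise $j\le d-1$. So $H_N(G)$ lives in the upper left $(d-1)\times(d-1)$ block, which fits inside the matrix when $N\ge d-1$. Conjugating by $J_N$ moves this block to the lower right corner. The final displayed specialization is the case $P=\prod_h(x+a_h)$, using $P(L_N)=\prod_h T_N(f_h)$.
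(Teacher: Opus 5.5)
Your proof is correct and is essentially the paper's argument. You apply Proposition~\ref{prop:images} and check, as the paper does, that only \(r=0\) in both image sums and \(r=-1\) in the Hankel sum survive, since every other index has absolute value at least \(N+3>d\). The one difference is the reduction step: you use linearity in \(P\) over the monomials \(x^k\), \(0\le k\le d\), handling \(P=1\) directly, whereas the paper factors \(P=p_d\prod_h(x+a_h)\) over \(\bbC\) and rescales; both reductions are valid.
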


\begin{proof}
Factor
\[
P(x)=p_d\prod_{h=1}^d(x+a_h)
\]
over \(\bbC\).  Factoring out the leading coefficient \(p_d\), it is enough to
prove the monic case.  Set
\[
G(z)=\prod_{h=1}^d(z+z^{-1}+a_h)
=\sum_{k=-d}^d c_kz^k .
\]
Propositions~\ref{prop:walk_product} and \ref{prop:images} apply with \(m=d\).
Multiplication by \(p_d\) at the end gives the general case.

Fix \(i,j\in\{1,\dots,N\}\).  In Proposition~\ref{prop:images}, the term
with \(r=0\) equals \(c_{i-j}-c_{i+j}\).  For \(r=1\),
\[
|i-j+2(N+1)|\ge2(N+1)-(N-1)=N+3>d,
\]
and \(i+j+2(N+1)>d\), so both coefficients vanish.  For \(r\le-2\) or
\(r\ge2\), the relevant indices have absolute value at least \(N+3\), and
their coefficients vanish as well.
Thus only $r=0$ and $r=-1$ can contribute.  The $r=-1$ contribution is
\(
c_{i-j-2(N+1)}-c_{i+j-2(N+1)}.
\)
The first term vanishes because \(|i-j-2(N+1)|\ge N+3>d\).
The second term is $-c_{i+j-2(N+1)}$, which equals $-c_{2(N+1)-i-j}$ by the palindromic symmetry $c_{-k}=c_k$.
Therefore \eqref{eq:images_formula} reduces to
\[
(A_1\cdots A_d)_{i,j}=c_{i-j}-c_{i+j}-c_{2(N+1)-i-j},
\]
which is exactly the $(i,j)$ entry of the right side of \eqref{eq:ToeplitzHankelIdentity}.
If \(N\ge d-1\), then \(c_{i+j}=0\) whenever \(i+j>d\).  Thus every
nonzero entry of \(H_N(G)\) has \(i,j\le d-1\).  Conjugation by \(J_N\) gives
the corresponding support at the lower right.
\end{proof}

\begin{remark}[Iteration of the product formula for finite sections and sharpness of the degree bound]
\label{rem:degree_sharp}
Iterating \eqref{eq:widom_finite_section} also gives a second proof of
\Cref{thm:ToeplitzHankelCorners} and identifies the additional boundary terms
that first appear when the degree bound fails.  It is enough to consider the
monic factorization
\[
Q_t(x):=\prod_{h=1}^t(x+a_h),
\qquad
G_t(z):=Q_t(z+z^{-1})=\sum_k c_k^{(t)}z^k .
\]
Set \(A_t:=T_N(z+z^{-1}+a_t)\) and \(H_t:=H_N(G_t)\).  Since
\(c_{-k}^{(t-1)}=c_k^{(t-1)}\), applying \eqref{eq:widom_finite_section} to
\(G_{t-1}\) and \(z+z^{-1}+a_t\) gives
\begin{equation}\label{eq:widom_iteration}
T_N(G_t)=T_N(G_{t-1})A_t+U_t+J_NU_tJ_N,
\qquad
U_t=u^{(t-1)}e_1^\top,
\end{equation}
where
\[
u^{(t-1)}:=
(c_1^{(t-1)},\ldots,c_N^{(t-1)})^\top .
\]
Define also
\[
v^{(t-1)}:=
(c_{N+2}^{(t-1)},c_{N+3}^{(t-1)},\ldots,c_{2N+1}^{(t-1)})^\top .
\]
The coefficient recurrence
\[
c_k^{(t)}=c_{k-1}^{(t-1)}+a_t c_k^{(t-1)}+c_{k+1}^{(t-1)}
\]
gives, by direct matrix multiplication,
\begin{equation}\label{eq:widom_hankel_iteration}
U_t+H_{t-1}A_t=H_t-v^{(t-1)}e_N^\top .
\end{equation}
Relative to \(H_t\), the matrix \(U_t+H_{t-1}A_t\) lacks precisely the entries
in \(v^{(t-1)}\) in its last column.  Since
\(J_NA_tJ_N=A_t\), conjugating \eqref{eq:widom_hankel_iteration} by \(J_N\)
gives
\[
J_NU_tJ_N+(J_NH_{t-1}J_N)A_t
=J_NH_tJ_N-(J_Nv^{(t-1)})e_1^\top .
\]
For \(t=1\), one has \(Q_1(L_N)=T_N(G_1)\) and \(H_1=0\).  If
\eqref{eq:ToeplitzHankelIdentity} holds for \(Q_{t-1}\), then
\eqref{eq:widom_iteration}--\eqref{eq:widom_hankel_iteration} give
\begin{equation}\label{eq:widom_remainder}
Q_t(L_N)=T_N(G_t)-H_t-J_NH_tJ_N
 +v^{(t-1)}e_N^\top+(J_Nv^{(t-1)})e_1^\top .
\end{equation}
If \(t\le N+2\), then \(G_{t-1}\) has Laurent degree at most \(N+1\), while
every coefficient in \(v^{(t-1)}\) has index at least \(N+2\).  Hence
\(v^{(t-1)}=0\), and induction proves
\eqref{eq:ToeplitzHankelIdentity} for every \(t\le N+2\).  Multiplication by
the leading coefficient gives the result for an arbitrary polynomial.

At the first excluded degree, \(t=N+3\), the identity at step \(t-1=N+2\)
still holds.  Because \(G_{N+2}\) is monic of Laurent degree \(N+2\), one has
\(v^{(N+2)}=e_1\).  Formula \eqref{eq:widom_remainder} therefore becomes
\[
Q_{N+3}(L_N)=T_N(G_{N+3})-H_N(G_{N+3})
-J_NH_N(G_{N+3})J_N+e_1e_N^\top+e_Ne_1^\top .
\]
Thus the degree condition cannot be increased uniformly.  For \(N=1\), the
two residual matrices coincide and their sum is the scalar \(2\), in
agreement with the example \(Q_4(x)=x^4\).
\end{remark}

\section{Determinant ratios and boundary corrections}\label{sec:determinants}

For fixed \(m\), the product \(P(L_N)\) is diagonalized by the discrete sine
transform, while its difference from \(T_N(F_m)\) has rank bounded
independently of \(N\).  We first represent the determinant ratio by a matrix
of fixed dimension, then evaluate its limit in terms of the roots \(\rho_h\),
and finally derive
an exact formula valid for every \(N\) from the
Borodin--Okounkov--Geronimo--Case identity.

\subsection{Reduction to finite dimension}
When $N\ge m-1$, the two Hankel corrections in Theorem~\ref{thm:ToeplitzHankelCorners} are supported in the $(m-1)\times(m-1)$ corners at the upper left and lower right.
In particular, the difference between $\prod_{h=1}^m T_N(f_h)$ and
$T_N(F_m)$ has rank at most $2(m-1)$, uniformly in \(N\).
This makes the determinant ratio computable by the matrix determinant lemma
on a space of dimension \(2(m-1)\).

For \(m=1\), both Hankel corrections vanish and the determinant ratio is
\(1\) whenever defined.  In the remainder of this subsection, assume \(m\ge2\).

\begin{proposition}[Determinant reduction for finite $N$]\label{prop:finiteN_reduction}
Assume \(m\ge2\) and $N\ge m-1$, and set $k:=m-1$.
Let $E_+\in\bbC^{N\times k}$ be the embedding
$E_+=[I_k\ 0]^\top$ selecting the first $k$ coordinates, and set
$E_-:=J_NE_+$.
Then
\[
H_N(F_m)=E_+H_k(F_m)E_+^\top,\qquad
J_NH_N(F_m)J_N=E_-H_k(F_m)E_-^\top,
\]
so
\begin{align}
\prod_{h=1}^{m}T_{N}(z+z^{-1}+a_h)
&= T_N(F_m)+U\Gamma U^\top,\label{eq:finite_rank_form}\\
U&:=[E_+\ E_-]\in\bbC^{N\times 2k},\qquad
\Gamma:=\operatorname{diag}\big(-H_k(F_m),-H_k(F_m)\big).
\end{align}
If $T_N(F_m)$ is invertible, then
\begin{equation}\label{eq:det_reduction}
\det\!\Big(\prod_{h=1}^{m}T_{N}(z+z^{-1}+a_h)\Big)
=\det T_N(F_m)\ \det\!\Big(I_{2k}+\Gamma\,U^\top T_N(F_m)^{-1}U\Big),
\end{equation}
so the ratio of determinants is a finite $2k\times2k$ determinant. Put
\[
U^\top T_N(F_m)^{-1}U=
\begin{pmatrix}
M_{++}&M_{+-}\\
M_{-+}&M_{--}
\end{pmatrix},
\qquad
M_{\sigma\tau}:=E_\sigma^\top T_N(F_m)^{-1}E_\tau
\quad(\sigma,\tau\in\{+,-\}).
\]
Then \eqref{eq:det_reduction} becomes
\begin{equation}\label{eq:block_ratio}
\frac{\det(\prod_{h=1}^{m}T_{N}(z+z^{-1}+a_h))}{\det T_N(F_m)}
=
\det\!\begin{pmatrix}
I_k-H_k(F_m)M_{++} & -H_k(F_m)M_{+-}\\
-H_k(F_m)M_{-+} & I_k-H_k(F_m)M_{--}
\end{pmatrix}.
\end{equation}
\end{proposition}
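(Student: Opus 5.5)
The plan is to combine Theorem~\ref{thm:ToeplitzHankelCorners} with the matrix determinant lemma; the steps are routine bookkeeping.

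\emph{Step 1: the corner factorizations.} Since \(N\ge m-1=k\), the entry \((i,j)\) of \(H_N(F_m)=[c_{i+j}]_{i,j=1}^N\) vanishes unless \(i+j\le m\). This forces \(i,j\le m-1=k\). Hence \(H_N(F_m)\) equals its upper left \(k\times k\) block \([c_{i+j}]_{i,j=1}^k=H_k(F_m)\), padded with zeros. In matrix form this is \(H_N(F_m)=E_+H_k(F_m)E_+^\top\). Conjugating by \(J_N\) and using \(E_-=J_NE_+\) gives
\[
J_NH_N(F_m)J_N=E_-H_k(F_m)E_-^\top,
\]
because \(E_-^\top=E_+^\top J_N\) (as \(J_N^\top=J_N\)). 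Both hypotheses of Theorem~\ref{thm:ToeplitzHankelCorners} hold, namely \(d=m\le N+1\le N+2\). The theorem then gives \(\prod_h T_N(f_h)=T_N(F_m)-E_+H_kE_+^\top-E_-H_kE_-^\top\), and this is exactly \eqref{eq:finite_rank_form} with the block-diagonal \(\Gamma\) and \(U=[E_+\ E_-]\).

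\emph{Step 2: the determinant reduction.} Assume \(T_N(F_m)\) is invertible. Then
\[
T_N(F_m)+U\Gamma U^\top=T_N(F_m)\bigl(I_N+T_N(F_m)^{-1}U\Gamma U^\top\bigr).
\]
Sylvester's identity \(\det(I_N+XY)=\det(I_{2k}+YX)\), applied with \(X=T_N(F_m)^{-1}U\) and \(Y=\Gamma U^\top\), gives \eqref{eq:det_reduction}. This step needs no linear independence of the columns of \(U\). That matters when \(N<2k\): then the ranges of \(E_+\) and \(E_-\) overlap, \(U\) has repeated or overlapping columns, and the identity still holds. This overlap is the only point that needs care, and the unconditional form of Sylvester's identity handles it.

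\emph{Step 3: the block form.} Write \(U^\top T_N(F_m)^{-1}U\) in blocks \(M_{\sigma\tau}=E_\sigma^\top T_N(F_m)^{-1}E_\tau\). Multiply on the left by \(\Gamma=\operatorname{diag}(-H_k,-H_k)\). Block row \(\sigma\) of \(\Gamma U^\top T^{-1}U\) is then \((-H_kM_{\sigma+},\,-H_kM_{\sigma-})\). Adding \(I_{2k}\) gives the matrix displayed in \eqref{eq:block_ratio}. Dividing by \(\det T_N(F_m)\neq0\) yields the stated ratio.
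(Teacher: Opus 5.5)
Your proposal is correct and follows the route the paper indicates. The support statement of Theorem~\ref{thm:ToeplitzHankelCorners} for \(N\ge m-1\) gives the factorizations through \(E_\pm\), and the matrix determinant lemma (Sylvester's identity) on the \(2k\)-dimensional space gives \eqref{eq:det_reduction} and the block form; as in the paper's remark on corner overlap, this needs no injectivity of \(U\) when \(N<2k\).
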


\begin{corollary}[Factorization by reversal symmetry]
\label{cor:parity_factorization}
Under the hypotheses of Proposition~\ref{prop:finiteN_reduction}, suppose
$T_N(F_m)$ is invertible.  With the blocks from
\eqref{eq:block_ratio}, one has
\[
M_{--}=M_{++},\qquad M_{-+}=M_{+-},
\]
and hence
\begin{equation}\label{eq:parity_factorization}
R_N=
\det\!\bigl(I_k-H_k(F_m)(M_{++}+M_{+-})\bigr)\,
\det\!\bigl(I_k-H_k(F_m)(M_{++}-M_{+-})\bigr),
\end{equation}
where \(R_N\) is the determinant ratio in \eqref{eq:R_N_intro}.
\end{corollary}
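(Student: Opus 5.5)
The plan is to exploit the persymmetry of \(T_N(F_m)\).  Since \(c_{-k}=c_k\), we have \(J_NT_N(F_m)J_N=T_N(F_m)\).  Because \(J_N^2=I_N\), this gives \(J_NT_N(F_m)^{-1}J_N=T_N(F_m)^{-1}\).  Recall that \(E_-=J_NE_+\) and that \(J_N^\top=J_N\).  Then
\[
M_{--}=E_+^\top J_N T_N(F_m)^{-1}J_NE_+=E_+^\top T_N(F_m)^{-1}E_+=M_{++},
\]
and
\[
M_{-+}=E_+^\top J_NT_N(F_m)^{-1}E_+=E_+^\top T_N(F_m)^{-1}J_NE_+=M_{+-}.
\]
The second identity uses \(J_NT_N(F_m)^{-1}=T_N(F_m)^{-1}J_N\).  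No transpose of \(T_N(F_m)^{-1}\) is needed, although it is also symmetric.  This settles the first claim.

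With \(H:=H_k(F_m)\), the matrix in \eqref{eq:block_ratio} now has the block form
\[
\begin{pmatrix} A & B\\ B & A\end{pmatrix},\qquad A=I_k-HM_{++},\quad B=-HM_{+-}.
\]
We conjugate it by the orthogonal-type matrix
\[
S=\frac{1}{\sqrt2}\begin{pmatrix} I_k& I_k\\ I_k&-I_k\end{pmatrix},\qquad S^{-1}=S.
\]
This is the restriction to the \(\pm1\) eigenspaces of the swap on \(\bbC^k\oplus\bbC^k\).  The conjugated matrix is the block diagonal matrix \(\operatorname{diag}(A+B,\,A-B)\).  Taking determinants gives
\[
\det(A+B)\det(A-B)=\det\bigl(I_k-H(M_{++}+M_{+-})\bigr)\det\bigl(I_k-H(M_{++}-M_{+-})\bigr).
\]
By \eqref{eq:block_ratio} the left-hand determinant equals \(R_N\).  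This requires \(D_N(F_m)\ne0\), which is the invertibility hypothesis.

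The only point needing care is the case \(N<2(m-1)\), where the ranges of \(E_+\) and \(E_-\) overlap.  The argument above, however, never uses disjointness.  It uses only the algebraic facts \(J_N^2=I_N\), \(J_N^\top=J_N\), and the commutation of \(J_N\) with \(T_N(F_m)^{-1}\).  Proposition~\ref{prop:finiteN_reduction} already holds for all \(N\ge m-1\), overlap included.  I do not expect any genuine obstacle; the step to check most carefully is the persymmetry \(J_NT_N(F_m)J_N=T_N(F_m)\), which follows entrywise from \(c_{(N+1-i)-(N+1-j)}=c_{j-i}=c_{i-j}\).
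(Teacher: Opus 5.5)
Your proposal is correct and takes essentially the same route as the paper. Both arguments transfer persymmetry from $T_N(F_m)$ to its inverse to get $M_{--}=M_{++}$ and $M_{-+}=M_{+-}$, then block-diagonalize $\bigl(\begin{smallmatrix}A&B\\ B&A\end{smallmatrix}\bigr)$ into $\operatorname{diag}(A+B,A-B)$ using the $\pm1$ eigenspaces $\{(v,\pm v)\}$ of the swap; your explicit conjugation by $S$ is just that change of basis written out, and like the paper's argument it needs neither commutativity of the blocks nor disjointness of the corner supports.
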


\begin{proof}
Because $F_m(z)=F_m(z^{-1})$, the matrix $T_N(F_m)$ is symmetric and
\emph{centrosymmetric}, meaning that
\(J_NT_N(F_m)J_N=T_N(F_m)\).  Its inverse therefore has the same two
properties and commutes with $J_N$.  Since $E_-=J_NE_+$, this gives
\[
M_{--}=E_+^\top J_NT_N(F_m)^{-1}J_NE_+=M_{++},
\qquad
M_{-+}=E_+^\top J_NT_N(F_m)^{-1}E_+=M_{+-}.
\]
The block determinant in \eqref{eq:block_ratio} consequently has the form
$\bigl(\begin{smallmatrix}A&B\\ B&A\end{smallmatrix}\bigr)$.
The subspaces consisting of vectors \((v,v)\) and \((v,-v)\) are invariant,
and the restrictions to them are \(A+B\) and \(A-B\), respectively.  This
gives \(\operatorname{diag}(A+B,A-B)\) after a change of basis and proves
\eqref{eq:parity_factorization}.  No commutativity among the $k\times k$
blocks is required.

These two subspaces are the \(+1\) and \(-1\) eigenspaces of the swap
\begin{equation}\label{eq:swap}
\mathcal R:=\begin{pmatrix}0&I_k\\I_k&0\end{pmatrix}\in\bbC^{2k\times2k}.
\end{equation}
Since \(E_-=J_NE_+\) and \(J_N^2=I_N\), one has \(U\mathcal R=J_NU\), so
\(\mathcal R\) represents reversal on the domain
\(\bbC^k\oplus\bbC^k\) of \(U\).  The factorization is thus always into two
eigenspaces of dimension \(k\), whatever
the relative size of \(N\) and \(2k\).  When \(N<2k\) the map \(U\) is not
injective and \(\mathcal R\) is not induced by a splitting of \(\bbC^N\)
itself, but \eqref{eq:parity_factorization} is unaffected.
\end{proof}

\begin{remark}[Reflective symmetry]\label{rem:reflective}
The decomposition of a symmetric centrosymmetric matrix into subspaces that
are even and odd under reversal is classical
\cite{CantoniButler76}.  A related splitting approach for centrosymmetric
sums of Toeplitz and Hankel matrices appears in \cite{Heinig01}, and the same parity
split is used for the boundary systems associated with a banded Toeplitz
matrix in \cite[Eq.~(9)]{HendrickxVanBarel02}.  A different but
related factorization of a full Toeplitz determinant with even symbol into
two determinants, each involving the sum of a Toeplitz matrix and a Hankel
matrix, is given in
\cite[Thms.~5.1--5.2]{BasorConrey24}.  The dimensions of those
two factors grow with the size of the Toeplitz matrix.  In
\eqref{eq:parity_factorization}, both factors have the fixed dimension
\(m-1\).

Splitting \eqref{eq:block_ratio} into the \(+1\) and \(-1\) eigenspaces of
\eqref{eq:swap} is analogous to the factorization of matching generating
functions under reflection symmetry \cite{Ciucu97}.  Here the symmetry acts
on the reduced boundary determinant.
\end{remark}

\begin{remark}[Corner overlap]
The hypothesis $N\ge m-1$ is the condition needed to realize the Hankel block
at the upper left as an $N\times k$ embedding with $k=m-1$.  This also implies
\(m\le N+2\), the degree condition in
Theorem~\ref{thm:ToeplitzHankelCorners}.  When
$N<2(m-1)$, the two corner blocks overlap.  The identity and determinant
reduction remain valid without disjoint support.  In that
regime \(U\) is not injective, and the factorization of
Corollary~\ref{cor:parity_factorization} is by the eigenspaces of the swap
\eqref{eq:swap} rather than by a splitting of \(\bbC^N\).
\end{remark}

\begin{lemma}[Limit of the boundary blocks]\label{lem:finite_section_limit}
If \(F_m(z)=\sum_kc_kz^k\), let
\[
T(F_m):=[c_{i-j}]_{i,j\ge0}
\]
denote the semi-infinite Toeplitz operator on
\(\ell^2(\mathbb Z_{\ge0})\).  Suppose that \(F_m\) has no zeros on
\(\bbT\) and has winding number zero, meaning that the closed curve
\(F_m(\bbT)\) winds zero times around the origin.  Then \(T(F_m)\) and
\(T_N(F_m)\) for all sufficiently large \(N\) are invertible.  For fixed
\(k=m-1\), let \(\iota_k\) be the coordinate embedding
\[
\begin{aligned}
\iota_k:\bbC^k&\longrightarrow\ell^2(\mathbb Z_{\ge0}),\\
(x_1,\dots,x_k)&\longmapsto(x_1,\dots,x_k,0,0,\dots),
\end{aligned}
\]
and set
\[
S_{11}:=\iota_k^\top T(F_m)^{-1}\iota_k.
\]
With \(E_\pm\) as in Proposition~\ref{prop:finiteN_reduction},
\[
E_+^\top T_N(F_m)^{-1}E_+\longrightarrow S_{11},
\qquad
E_+^\top T_N(F_m)^{-1}E_-\longrightarrow0.
\]
Consequently, the determinant ratio in \eqref{eq:R_N_intro} converges to
\begin{equation}\label{eq:boundary_det_limit}
\det(I_k-H_k(F_m)S_{11})^2 .
\end{equation}
\end{lemma}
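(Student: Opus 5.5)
We plan to derive the lemma from the finite section method for Toeplitz operators with continuous, nonvanishing symbol of winding number zero, and then pass to the limit in the block formula of Corollary~\ref{cor:parity_factorization}.

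\emph{Invertibility.} Since \(F_m\) is a Laurent polynomial with no zeros on \(\bbT\) and winding number zero, it admits a Wiener--Hopf factorization \(F_m=F_-F_+\). Here \(F_+\) and \(F_+^{-1}\) are analytic in the disk, and \(F_-\) and \(F_-^{-1}\) are analytic outside it. Explicitly, up to a constant, \(F_+(z)=\prod_h(1-\rho_h z)\) when \(F_m\) has the factored form, or one can use a general logarithm argument. Then \(T(F_m)=T(F_-)T(F_+)\) is invertible with inverse \(T(F_+^{-1})T(F_-^{-1})\). By the Gohberg--Feldman finite section theorem, the sections \(T_N(F_m)\) are invertible for large \(N\) and \(\sup_N\|T_N(F_m)^{-1}\|<\infty\). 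Moreover \(T_N(F_m)^{-1}\Pi_N\to T(F_m)^{-1}\) strongly. We use the same theorem for the transpose, and \(T(F_m)\) is symmetric here, so the convergence also holds strongly for adjoints.

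\emph{Upper-left block.} The columns of \(E_+\) are fixed vectors \(e_1,\dots,e_k\). Therefore \(T_N(F_m)^{-1}E_+x\to T(F_m)^{-1}\iota_kx\) in \(\ell^2\), and pairing with the fixed vectors \(E_+^\top\) gives \(M_{++}\to S_{11}\). The index shift between the paper's 1-based convention and \(\mathbb Z_{\ge0}\) only relabels coordinates.

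\emph{Off-diagonal block.} The entries of \(M_{+-}\) are \((T_N^{-1})_{i,N+1-j}\) with \(i,j\le k\), that is, entries of the inverse at distance about \(N\) from the diagonal. This is the step I expect to be the main obstacle, since strong convergence alone does not control far corners. I would write \(T_N(F_m)^{-1}E_-=J_N T_N(F_m)^{-1}E_+\) using centrosymmetry. Then \(M_{+-}=E_+^\top J_N\,T_N(F_m)^{-1}E_+\). This is the pairing of the convergent vectors \(y_N:=T_N(F_m)^{-1}E_+x\to y:=T(F_m)^{-1}\iota_kx\in\ell^2\) with the last \(k\) coordinates. Write \(y_N=\Pi_Ny+(y_N-\Pi_Ny)\). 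The coordinates of \(\Pi_N y\) near index \(N\) tend to \(0\) because \(y\in\ell^2\). The remainder tends to \(0\) in norm. Hence \(M_{+-}\to0\). If one prefers quantitative control, the inverse \(T(F_+^{-1})T(F_-^{-1})\) has exponentially decaying entries, giving \(\cO(\max_h|\rho_h|^{N})\).

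\emph{Conclusion.} Insert these limits into Corollary~\ref{cor:parity_factorization}: both factors \(I_k-H_k(F_m)(M_{++}\pm M_{+-})\) converge to \(I_k-H_k(F_m)S_{11}\). The determinant is continuous, so \(R_N\to\det(I_k-H_k(F_m)S_{11})^2\), which is \eqref{eq:boundary_det_limit}.
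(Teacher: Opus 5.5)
Your proof is correct. It handles the one delicate step, $E_+^\top T_N(F_m)^{-1}E_-\to0$, by a genuinely different argument from the paper's.

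The paper proves a uniform off-diagonal decay bound $|(T_N(F_m)^{-1})_{ij}|\le Cq^{|i-j|}$. It applies the Demko--Moss--Smith estimate to the banded positive definite matrix $A_N^*A_N$, whose condition number is uniformly bounded by stability, and writes $A_N^{-1}=(A_N^*A_N)^{-1}A_N^*$. This gives $\|M_{+-}\|\le Ckq^{N-2k+1}$.

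You instead use centrosymmetry to turn the far corner into the last $k$ coordinates of $y_N=T_N(F_m)^{-1}E_+x$. You then need only three facts: the strong convergence $T_N(F_m)^{-1}\Pi_N\to T(F_m)^{-1}$ on the fixed vectors $e_1,\dots,e_k$, the bound $|(y_N)_p|\le|y_p|+\|y_N-y\|$, and the vanishing of the $\ell^2$ tail of $y$. The paper is right that a uniform norm bound on the inverses alone does not suffice, but strong convergence does, so no decay theorem for inverses of banded matrices is needed.

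The trade-off is that your argument is qualitative, whereas the paper's yields an explicit exponential rate. You can get a rate along your route from the identity $T_N^{-1}\Pi_Nx-\Pi_NT^{-1}x=T_N^{-1}\Pi_NT(I-\Pi_N)T^{-1}x$, combined with the decay of the columns of $T(F_+^{-1})T(F_-^{-1})$. For repeated $\rho_h$, however, those columns carry polynomial factors: the first column is proportional to $((p+1)\rho^p)_{p\ge0}$ when $m=2$ and $\rho_1=\rho_2=\rho$. Your parenthetical bound $\cO(\max_h|\rho_h|^N)$ should therefore be $\cO(r_+^N)$ for any $r_+>\max_h|\rho_h|$.
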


\begin{proof}
The invertibility and stability of the finite sections, together with
convergence of each fixed corner of the inverses, follow from the standard
theory of finite sections for scalar Toeplitz operators whose symbols are
nonvanishing and have winding number zero \cite{BS99}.  For related analyses
of perturbations in the corners,
see \cite{BFGM15}.

To prove that \(E_+^\top T_N(F_m)^{-1}E_-\to0\), a bound on the norm of the
inverse is not by itself enough, and we use instead exponential decay away
from the diagonal.
Indeed, put \(A_N:=T_N(F_m)\), and let \(A_N^*\) denote its Hermitian adjoint.
Stability of the finite sections gives a uniform bound on \(\|A_N^{-1}\|\) for all
sufficiently large \(N\), while
\(\|A_N\|\le\sum_{k=-m}^{m}|c_k|\) uniformly in \(N\).  Hence
\(A_N^*A_N\) is positive definite, satisfies
\((A_N^*A_N)_{ij}=0\) whenever \(|i-j|>2m\), and its spectral condition
number is bounded uniformly in \(N\).  The decay estimate for the inverse from
\cite{DemkoMossSmith84,BG05} applies to \((A_N^*A_N)^{-1}\).  Combining it with
\(A_N^{-1}=(A_N^*A_N)^{-1}A_N^*\) and the bandwidth of \(A_N^*\) yields
constants \(C>0\) and \(q\in(0,1)\), both independent of \(N\), such that
\begin{equation}\label{eq:inverse_decay}
\bigl|\bigl(T_N(F_m)^{-1}\bigr)_{ij}\bigr|\le Cq^{|i-j|},
\qquad 1\le i,j\le N,
\end{equation}
for all sufficiently large \(N\).  Only the existence of some such \(q\) is
needed below.  The entry in position \((i,j)\) of
\(E_+^\top T_N(F_m)^{-1}E_-\) is
\[
\bigl(T_N(F_m)^{-1}\bigr)_{i,N+1-j},\qquad 1\le i,j\le k,
\]
and
\[
\bigl|i-(N+1-j)\bigr|=N+1-i-j\ge N-2k+1.
\]
Hence
\[
\bigl\|E_+^\top T_N(F_m)^{-1}E_-\bigr\|\le Ckq^{\,N-2k+1}\longrightarrow0
\qquad(N\to\infty),
\]
with \(k\) fixed.  Substitution of these limits into
Proposition~\ref{prop:finiteN_reduction} gives
\eqref{eq:boundary_det_limit}.  The same limit follows by applying these block
limits separately to the two factors in \eqref{eq:parity_factorization}.
\end{proof}

\subsection{The limiting determinant ratio}
With \(k=m-1\), Lemma~\ref{lem:finite_section_limit} shows that the ratio
\(R_N\) in \eqref{eq:R_N_intro} converges to
\(\det(I_k-H_k(F_m)S_{11})^2\).  We evaluate this limit explicitly in terms
of the roots of the quadratic factors of \(F_m\).

Assume from now on that each
\[
f_h(z)=z+z^{-1}+a_h
\]
is nonzero on $\bbT$.  This is equivalent to the quadratic polynomial
\(z^2+a_hz+1\) having no zero on the unit circle, and also to
\(a_h\notin[-2,2]\subset\mathbb R\).  Since the two roots of this quadratic
have product \(1\) and neither lies on \(\bbT\), exactly one lies in the open
unit disk.  Denote it by \(\rho_h\).  Thus
\begin{equation}\label{eq:rho_def}
\rho_h^2+a_h\rho_h+1=0,
\qquad |\rho_h|<1.
\end{equation}
Equivalently, choose the branch of the square root for which
\begin{equation}\label{eq:rho_branch}
\rho_h:=\frac{-a_h+\sqrt{a_h^2-4}}{2}
\qquad\text{satisfies}\qquad |\rho_h|<1,
\end{equation}
so that the other root is $\rho_h^{-1}$.
Equation~\eqref{eq:rho_def} gives the factorization
\begin{equation}\label{eq:fh_factor}
f_h(z)=z+z^{-1}+a_h
=z^{-1}(z^2+a_h z+1)
=-\rho_h^{-1}\,(1-\rho_h z)\,(1-\rho_h z^{-1}).
\end{equation}

\begin{theorem}[Limit in terms of the roots in the open unit disk]\label{thm:corner_constant_roots}
Let \(m\ge1\), let \(a_1,\dots,a_m\in\bbC\), and suppose that
\(f_h(z)=z+z^{-1}+a_h\) is nonzero for every \(z\in\bbT\) and every
\(h\).  For each \(h\), let \(\rho_h\) be the unique root of
\(z^2+a_hz+1\) satisfying \(|\rho_h|<1\), and put
\(F_m=\prod_{h=1}^mf_h\).  Then \(D_N(F_m)\ne0\) for all sufficiently
large \(N\) and, for fixed \(m\) and \(N\to\infty\),
\begin{equation}\label{eq:corner_constant_product}
\frac{\det\big(\prod_{h=1}^m T_N(f_h)\big)}{\det T_N(F_m)}
\longrightarrow
\prod_{1\le i<j\le m}(1-\rho_i\rho_j)^2.
\end{equation}
If \(m\ge2\), then, with \(S_{11}\) defined in
Lemma~\ref{lem:finite_section_limit},
\begin{equation}\label{eq:detIk_explicit}
\det(I_{m-1}-H_{m-1}(F_m)S_{11})
=\prod_{1\le i<j\le m}(1-\rho_i\rho_j).
\end{equation}
For \(m=1\), the corresponding empty determinant and empty product are both
interpreted as \(1\).
\end{theorem}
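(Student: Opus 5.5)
The plan is to compute the numerator and the denominator of \(R_N\) separately, as the introduction indicates, and then to read off \eqref{eq:detIk_explicit} from Lemma~\ref{lem:finite_section_limit}.

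\emph{Denominator.}  By \eqref{eq:fh_factor},
\[
F_m(z)=\Big(\prod_h(-\rho_h^{-1})\Big)\prod_h(1-\rho_hz)(1-\rho_hz^{-1}).
\]
This is nonvanishing on \(\bbT\) and has winding number zero.  Its geometric mean is \(G=\prod_h(-\rho_h^{-1})\).  Write \(F_m=G\,b_+b_-\) with \(b_\pm(z)=\prod_h(1-\rho_hz^{\pm1})\), so that \(\log b_\pm\) has coefficients \(\mp\)-indexed \(-\sum_h\rho_h^k/k\).  The strong Szeg\H{o} theorem then gives
\[
D_N(F_m)\sim G^N E,\qquad E=\exp\Big(\sum_{k\ge1}k\,[\log F_m]_k[\log F_m]_{-k}\Big)=\exp\Big(\sum_{k\ge1}\tfrac1k\big(\textstyle\sum_h\rho_h^k\big)^2\Big)=\prod_{i,j}(1-\rho_i\rho_j)^{-1}.
\]
Since \(E\neq0\), this also shows \(D_N(F_m)\ne0\) for large \(N\), a fact already contained in Lemma~\ref{lem:finite_section_limit}.

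\emph{Numerator.}  Since \(T_N(f_h)\) commute, \(\det\prod_hT_N(f_h)=\prod_hD_N(f_h)\).  Each \(D_N(f_h)\) is a tridiagonal determinant.  It is given exactly by \(D_N(f_h)=(-\rho_h^{-1})^{N}\,(1-\rho_h^{2N+2})/(1-\rho_h^2)\), either by the three-term recurrence or by Szeg\H{o} with \(m=1\).  Hence \(D_N(f_h)\sim(-\rho_h^{-1})^N(1-\rho_h^2)^{-1}\).

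Taking the ratio, the factors \(G^N\) cancel, and the limit is
\[
\prod_h(1-\rho_h^2)^{-1}\prod_{i,j}(1-\rho_i\rho_j)=\prod_{i\ne j}(1-\rho_i\rho_j)=\prod_{i<j}(1-\rho_i\rho_j)^2,
\]
which proves \eqref{eq:corner_constant_product}.

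\emph{The boundary determinant.}  By Lemma~\ref{lem:finite_section_limit}, the same limit equals \(\det(I_k-H_k(F_m)S_{11})^2\).  Squaring therefore only determines \eqref{eq:detIk_explicit} up to sign, and this sign is the main obstacle.

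To fix it, I would use a continuity/connectedness argument.  Both sides are continuous functions of \((\rho_1,\dots,\rho_m)\) on the connected set \(\{|\rho_h|<1\}^m\).  The left side is continuous because \(T(F_m)^{-1}=T(b_-^{-1})T(b_+^{-1})G^{-1}\) depends continuously on the \(\rho_h\).  Both sides are nonvanishing there, since \(|\rho_i\rho_j|<1\).  Their ratio is therefore a continuous function on a connected set that takes values in \(\{\pm1\}\), so it is constant.  At \(\rho=0\) the ratio is \(1\).  Indeed, \(S_{11}\) is then a corner of the inverse of a triangular-factored operator, and \(\det\to1\) follows because the limit of \(R_N\) is \(1\) and we can evaluate directly.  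Alternatively, take \(\rho_h=\varepsilon\) small: \(H_k(F_m)S_{11}=O(\varepsilon)\), since \(G^{-1}=\prod(-\rho_h)\) is small while the \(c_k\) grow at most like \(G\cdot O(1)\).  This last step needs care, so I would first check it by a direct computation of \(H_k(F_m)S_{11}\) via the Wiener--Hopf factorization \(T(F_m)^{-1}=G^{-1}T(b_+^{-1})T(b_-^{-1})\).  In that computation \(H_k(F_m)\) involves only the analytic parts and produces a matrix with determinant expressible through \(b_+\).  If that route is cleaner, it would give \eqref{eq:detIk_explicit} directly without the sign issue.
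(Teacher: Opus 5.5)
Your proposal is correct and follows essentially the paper's own route: the strong Szeg\H{o} theorem for each \(f_h\) (you may equally use the exact tridiagonal formula) and for \(F_m\), cancellation of the geometric-mean factors, comparison with Lemma~\ref{lem:finite_section_limit} to obtain the square of \eqref{eq:detIk_explicit}, and a connectedness argument anchored at \(\boldsymbol\rho\to0\) to fix the sign. For that anchor, state explicitly that \(H_k(F_m)S_{11}=H_k(g)\,\iota_k^\top T(g)^{-1}\iota_k\) with \(g=\prod_h(1-\rho_hz)(1-\rho_hz^{-1})\), by scale invariance. The left side then equals \(1\) at \(\boldsymbol\rho=0\) simply because \(g=1\) and \(H_k(1)=0\); appealing to the limit of \(R_N\) there is circular, since it only yields the square. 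Note also that the correct order of the factors is \(T(g)^{-1}=T(b_+^{-1})T(b_-^{-1})\).
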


\begin{proof}
Since determinants are multiplicative,
\(
\det\big(\prod_{h=1}^m T_N(f_h)\big)=\prod_{h=1}^m \det T_N(f_h).
\)
For a sufficiently regular, nonvanishing symbol $g$ of winding number zero,
the strong Szeg\H{o} theorem \cite{Szego52} gives (see, for instance,
\cite{BS99} or the survey \cite{DIK13})
\begin{equation}\label{eq:szego}
D_N(g)=
\exp\!\left(
N(\log g)_0+\sum_{k\ge1}k(\log g)_k(\log g)_{-k}
\right)(1+o(1)).
\end{equation}
Here zero winding guarantees a continuous branch of \(\log g\) on
\(\bbT\), and our convention for Fourier coefficients is
\[
(\log g)_k
:=\frac{1}{2\pi}\int_0^{2\pi}
\log g(e^{\ii\theta})e^{-\ii k\theta}\,d\theta.
\]
Regarded as a meromorphic function, each \(f_h\) has one zero and one pole in
\(|z|<1\), and hence has winding number zero on \(\bbT\).  The same is true of
\(F_m\).
Choose the branch for \(F_m\) to be
\(\log F_m=\sum_h\log f_h\).  Applying \eqref{eq:szego} to each \(f_h\)
and to \(F_m=\prod_h f_h\), the terms \(N(\log g)_0\) then cancel in the
ratio because
\((\log F_m)_0=\sum_h(\log f_h)_0\).

Using \eqref{eq:fh_factor} with $|\rho_h|<1$, we have on $\bbT$ the Fourier expansions
\(
\log(1-\rho_h z)=-\sum_{k\ge 1}\rho_h^k z^k/k,
\)
and
\(
\log(1-\rho_h z^{-1})=-\sum_{k\ge 1}\rho_h^k z^{-k}/k.
\)
Hence for $k\ge 1$,
\begin{equation}\label{eq:logfh_coeffs}
(\log f_h)_k=(\log f_h)_{-k}=-\frac{\rho_h^k}{k}.
\end{equation}
Therefore the Szeg\H{o} constant for $f_h$ is
\begin{equation}\label{eq:Ef_h}
\exp\Big(\sum_{k\ge 1}k(\log f_h)_k(\log f_h)_{-k}\Big)
=\exp\Big(\sum_{k\ge 1}\frac{\rho_h^{2k}}{k}\Big)=\frac{1}{1-\rho_h^2}.
\end{equation}

For $F_m$, we have $(\log F_m)_k=\sum_h (\log f_h)_k=-(\sum_h \rho_h^k)/k$ and similarly for $-k$, so
\begin{align}
\exp\Big(\sum_{k\ge 1}k(\log F_m)_k(\log F_m)_{-k}\Big)
&=\exp\Big(\sum_{k\ge 1}\frac{(\sum_h \rho_h^k)^2}{k}\Big)
=\exp\Big(\sum_{i,j=1}^m\sum_{k\ge 1}\frac{(\rho_i\rho_j)^k}{k}\Big)\\
&=\prod_{i,j=1}^m\frac{1}{1-\rho_i\rho_j}.
\end{align}
Combining this with \eqref{eq:Ef_h} and the cancellation of the $N(\log g)_0$ terms in \eqref{eq:szego} yields
\[
\lim_{N\to\infty}\frac{\det\big(\prod_h T_N(f_h)\big)}{\det T_N(F_m)}
=\frac{\prod_{h=1}^m (1-\rho_h^2)^{-1}}{\prod_{i,j}(1-\rho_i\rho_j)^{-1}}
=\prod_{i\ne j}(1-\rho_i\rho_j)
=\prod_{i<j}(1-\rho_i\rho_j)^2,
\]
which is \eqref{eq:corner_constant_product}.
The leading term supplied by \eqref{eq:szego} for $D_N(F_m)$ is nonzero,
so $D_N(F_m)\ne0$ for all sufficiently large $N$.
Comparing this limit with \eqref{eq:boundary_det_limit} gives the square of
\eqref{eq:detIk_explicit}.  It remains to identify the factor itself.

Multiplying a symbol by a nonzero scalar does not change the product
\(H_{m-1}(F_m)S_{11}\).  We may therefore replace \(F_m\) by
\[
g_{\boldsymbol\rho}(z)
:=\prod_{h=1}^m(1-\rho_hz)(1-\rho_hz^{-1}).
\]
For \(0<t\le1\), replace every \(\rho_h\) by \(t\rho_h\), and let
\(S_{11}(t)\) denote the corresponding block.  As
\(t\to0\), the symbol \(g_{t\boldsymbol\rho}\) converges to \(1\), so
\(H_{m-1}(g_{t\boldsymbol\rho})\to0\).  Norm continuity of inversion gives
\(T(g_{t\boldsymbol\rho})^{-1}\to I\), and hence
\(S_{11}(t)\to I_{m-1}\).  Therefore the determinant on the left of
\eqref{eq:detIk_explicit}, evaluated at \(t\boldsymbol\rho\), tends to \(1\),
as does its right hand side.  The product on the right is nonzero on the
connected set
\[
\bigl\{(\rho_1,\dots,\rho_m):0<|\rho_h|<1\bigr\}
\]
so the quotient of the two sides is continuous there.  Its square equals
\(1\), and it is therefore identically \(1\), which proves
\eqref{eq:detIk_explicit}.
\end{proof}

\begin{remark}[The case \(m=2\)]
Here \(H_1(F_2)=[1]\), so \eqref{eq:detIk_explicit} gives
\(S_{11}=\rho_1\rho_2\) directly.
\end{remark}

\subsection{The exact correction for finite \texorpdfstring{$N$}{N}}
For \(m\ge2\) and \(N\ge m-1\), Proposition~\ref{prop:finiteN_reduction} already represents \(R_N\) by a
\(2(m-1)\times2(m-1)\) determinant.  For \(m=1\), \(R_N=1\) identically.
We first record a specialization of the Toeplitz minor identity in
\cite[Eq.~(19)]{GGT20}.  It expresses \(D_N(F_m)\) in terms of the Schur
polynomial indexed by the rectangular partition
\((N^m)=(N,\dots,N)\) with \(m\) parts.  The parameters \(\rho_h\) need only
be nonzero.  They may coincide and need not lie in the open unit disk.

\begin{proposition}[Schur polynomial representation]\label{prop:rectangular_schur}
Let \(N,m\ge1\), let \(\rho_1,\dots,\rho_m\in\bbC\setminus\{0\}\), and write
\[
\boldsymbol\rho=(\rho_1,\dots,\rho_m),\qquad
\boldsymbol\rho^{-1}=(\rho_1^{-1},\dots,\rho_m^{-1}).
\]
Define
\[
a_h:=-(\rho_h+\rho_h^{-1}),\qquad
f_h(z):=z+z^{-1}+a_h,\qquad
F_m(z):=\prod_{h=1}^m f_h(z).
\]
Then
\begin{equation}\label{eq:rectangular_schur}
D_N(F_m)=(-1)^{mN}
s_{(N^m)}(\boldsymbol\rho^{-1},\boldsymbol\rho),
\end{equation}
where \((\boldsymbol\rho^{-1},\boldsymbol\rho)\) denotes the concatenated
\(2m\)-tuple, \((N^m)=(N,\dots,N)\) has \(m\) parts, and \(s_\lambda\)
denotes the Schur polynomial indexed by \(\lambda\).  If \(D_N(F_m)\ne0\),
then
\begin{equation}\label{eq:R_N_schur}
R_N=\frac{\displaystyle\prod_{h=1}^m
s_{(N)}(\rho_h^{-1},\rho_h)}
{\displaystyle
s_{(N^m)}(\boldsymbol\rho^{-1},\boldsymbol\rho)}.
\end{equation}
\end{proposition}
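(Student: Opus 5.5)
The plan is to rewrite \(F_m\) as a polynomial times a monomial and then use the classical formula for Toeplitz determinants with polynomial symbols. By \eqref{eq:fh_factor},
\[
F_m(z)=\prod_{h=1}^m\bigl(-\rho_h^{-1}\bigr)(1-\rho_hz)(1-\rho_hz^{-1})
=\Bigl(\prod_h(-\rho_h^{-1})\Bigr)z^{-m}\prod_{h=1}^m(1-\rho_hz)(z-\rho_h).
\]
Thus \(F_m(z)=z^{-m}Q(z)\), where \(Q(z)=\prod_{x\in\boldsymbol\rho^{-1}\cup\boldsymbol\rho}(z-x)\) is monic of degree \(2m\) with roots the concatenated \(2m\)-tuple \((\boldsymbol\rho^{-1},\boldsymbol\rho)\). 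The scalar prefactor combines with \(\prod_h(-\rho_h)\) from \(1-\rho_hz=-\rho_h(z-\rho_h^{-1})\) to give exactly \(1\). This identity is purely algebraic, so it needs no hypothesis on \(|\rho_h|\) and no distinctness.

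Next I will use the dual Jacobi--Trudi identity. Since \(Q(z)=\sum_{r=0}^{2m}(-1)^re_r(\boldsymbol x)z^{2m-r}\) with \(\boldsymbol x=(\boldsymbol\rho^{-1},\boldsymbol\rho)\), the Laurent coefficient is \(c_k=(-1)^{m-k}e_{m-k}(\boldsymbol x)\). Hence
\[
T_N(F_m)=[(-1)^{m-i+j}e_{m-i+j}(\boldsymbol x)]_{i,j=1}^N .
\]
The sign factor \((-1)^{m-i+j}\) is \((-1)^m(-1)^i(-1)^j\), so conjugating by \(\operatorname{diag}((-1)^i)\) removes the \(i,j\)-dependent part and leaves \(D_N(F_m)=(-1)^{mN}\det[e_{m-i+j}]_{i,j=1}^N\). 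Reversing the order of both rows and columns, which does not change the determinant, turns this into \(\det[e_{m+i-j}]\). This is the dual Jacobi--Trudi determinant \(\det[e_{\lambda'_i-i+j}]\) for \(\lambda'=(m^N)\), that is, \(\lambda=(N^m)\). Therefore \(D_N(F_m)=(-1)^{mN}s_{(N^m)}(\boldsymbol x)\), which is \eqref{eq:rectangular_schur}.

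Equivalently, one could quote \cite[Eq.~(19)]{GGT20} with empty minor data, or use the classical Baxter--Schmidt/Day-type result \(D_N(z^{-m}Q)=(-1)^{mN}s_{(N^m)}\)(roots of \(Q\)). The main point requiring care is the index orientation: whether the convention \(T_N=[g_{i-j}]\) yields \(e_{m+i-j}\) or \(e_{m-i+j}\). Either orientation gives the same determinant after transposition, so I expect no real obstacle, only sign bookkeeping.

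For \eqref{eq:R_N_schur}, I apply the case \(m=1\) to each factor, which gives \(\det T_N(f_h)=(-1)^Ns_{(N)}(\rho_h^{-1},\rho_h)\). Multiplying over \(h\) produces the sign \((-1)^{mN}\), which cancels against the sign in \eqref{eq:rectangular_schur} when dividing by \(D_N(F_m)\neq0\). Using \(\det P(L_N)=\prod_h\det T_N(f_h)\) then yields the stated ratio.
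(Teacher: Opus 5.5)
Your proposal is correct and is essentially the paper's own proof. Both write $F_m(z)=z^{-m}\prod_{j=1}^{2m}(z-x_j)$ with $x=(\boldsymbol\rho^{-1},\boldsymbol\rho)$, read off $T_N(F_m)=[(-1)^{m-i+j}e_{m-i+j}(x)]$, strip the signs to get $(-1)^{mN}\det[e_{m-i+j}(x)]$, apply the dual Jacobi--Trudi identity with $(N^m)'=(m^N)$, and use the case $m=1$ together with multiplicativity for \eqref{eq:R_N_schur}.

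One small quibble: your reversal step is unnecessary and slightly mislabelled. $\det[e_{m-i+j}]_{i,j=1}^N$ is already the dual Jacobi--Trudi determinant $\det[e_{\lambda'_i-i+j}]$ for $\lambda'=(m^N)$, whereas $\det[e_{m+i-j}]$ is its transpose. The conclusion is unaffected, since the two determinants are equal.
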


\begin{proof}
We give a direct proof using the dual Jacobi--Trudi identity
\cite[Ch.~I, Sec.~3, Eq.~(3.5)]{Macdonald95}.  Put
\(x=(\boldsymbol\rho^{-1},\boldsymbol\rho)\), and define the elementary
symmetric polynomials by
\[
\prod_{j=1}^{2m}(1+x_ju)=\sum_{r=0}^{2m}e_r(x)u^r,
\qquad e_r(x)=0\quad\text{if }r<0\text{ or }r>2m.
\]
Factoring each quadratic gives
\[
F_m(z)=z^{-m}\prod_{j=1}^{2m}(z-x_j)
=\sum_{k=-m}^{m}(-1)^{m-k}e_{m-k}(x)z^k.
\]
Thus the entry in position \((i,j)\) of \(T_N(F_m)\) is
\((-1)^{m-i+j}e_{m-i+j}(x)\).  Factoring the signs from the rows and
columns yields
\[
D_N(F_m)=(-1)^{mN}\det[e_{m-i+j}(x)]_{i,j=1}^{N}.
\]
The partition \((N^m)\) has conjugate partition \((m^N)\).  The dual
Jacobi--Trudi identity therefore identifies the last determinant with
\(s_{(N^m)}(x)\), proving \eqref{eq:rectangular_schur}.
All entries are polynomials in the variables \(x_j\), so this argument
also applies when roots coincide.

To relate this derivation to the Toeplitz minor identity already cited, put
\[
g(z):=\prod_{h=1}^m(1-\rho_hz)(1-\rho_hz^{-1}).
\]
Since \(F_m=(-1)^m(\prod_h\rho_h^{-1})g\), the same formula reads
\[
D_N(g)=\Big(\prod_{h=1}^m\rho_h\Big)^N
s_{(N^m)}(\boldsymbol\rho^{-1},\boldsymbol\rho).
\]
This is precisely the specialization of \cite[Eq.~(19)]{GGT20} with
\(j=k=0\) and both sets of variables equal to
\(-\boldsymbol\rho\), after using homogeneity to remove the signs.
Finally, applying \eqref{eq:rectangular_schur} with \(m=1\) gives
\(D_N(f_h)=(-1)^Ns_{(N)}(\rho_h^{-1},\rho_h)\).
Multiplication over \(h\) and division by \eqref{eq:rectangular_schur}
prove \eqref{eq:R_N_schur} whenever its denominator is nonzero.
\end{proof}

Under the additional assumptions that the \(\rho_h\) are pairwise distinct
and lie in the open unit disk, the next proposition rewrites these expressions
using an \(m\times m\) determinant involving the Cauchy matrix.  Because the
symbol is rational and each quadratic
factor has the factorization \eqref{eq:fh_factor}, the kernel entering the
Borodin--Okounkov--Geronimo--Case Fredholm determinant has finite rank.  The
result specializes classical formulas for rational symbols and root formulas
for banded Toeplitz determinants to this family
\cite{Day75,BasorForrester94,Widom58,SchmidtSpitzer60,BaxterSchmidt61,Trench85,BG05}.
For general symmetric banded Toeplitz matrices, Chebyshev determinant formulas
of fixed order in terms of the reciprocal root pairs of the Laurent symbol are
available in \cite[Thm.~1]{Elouafi14}.  Related Fredholm expansions for finite
rank kernels associated with rational symbols are developed in
\cite[Sec.~6]{BasorConrey24}.

All asymptotic estimates in this subsection keep \(m\) and the roots
\(\rho_1,\dots,\rho_m\) fixed while \(N\to\infty\).
More generally, higher terms in the Szeg\H{o}--Widom asymptotic expansion for
symbols in Wiener algebras with power weights were established in
\cite[Thm.~8]{BottcherSilbermann80}.  Extensions to more general
weights, with remainders controlled by the corresponding weight tails, are
given in \cite[Thm.~1.4(b),(c)]{Karlovich07}.  For the present rational
family, the reduction instead gives the exact formula below for every \(N\)
and identifies the first exponentially small correction.

\begin{proposition}[Exact formula for finite \texorpdfstring{$N$}{N}]\label{prop:finiteN_root}
Let \(N,m\ge1\), and assume that
\(0<|\rho_h|<1\) for \(h=1,\dots,m\) and that the \(\rho_h\) are pairwise
distinct.  Let \(a_h\), \(f_h\), and \(F_m\) be as in
Proposition~\ref{prop:rectangular_schur}, and let
\begin{equation}\label{eq:g_symbol}
g(z):=\prod_{h=1}^m (1-\rho_h z)(1-\rho_h z^{-1}),
\qquad
\Phi(z):=\prod_{h=1}^m(1-\rho_h z).
\end{equation}
Thus \(g\) is palindromic in the sense that \(g(z)=g(z^{-1})\).
Define the weights
\begin{equation}\label{eq:beta_weights}
\beta_h := \rho_h^{m-1}\,\frac{\prod_{j=1}^m (1-\rho_h\rho_j)}{\prod_{j\ne h}(\rho_h-\rho_j)}
= \rho_h^{m-1}(1-\rho_h^2)\prod_{j\ne h}\frac{1-\rho_h\rho_j}{\rho_h-\rho_j},
\qquad h=1,\dots,m,
\end{equation}
the Cauchy matrix
\begin{equation}\label{eq:C_matrix}
\mathcal C_{ij}:=\frac{1}{1-\rho_i\rho_j}\qquad (1\le i,j\le m),
\end{equation}
and the diagonal matrix
\begin{equation}\label{eq:Omega_matrix}
\Omega_N := \operatorname{diag}(\beta_1\rho_1^{N+1},\dots,
\beta_m\rho_m^{N+1}).
\end{equation}
Then the Toeplitz determinant of $g$ admits the exact representation
\begin{equation}\label{eq:detTg_exact}
\det T_N(g) =
\Big(\prod_{i,j=1}^m (1-\rho_i\rho_j)\Big)^{-1}\,
\det\big(I_m - (\Omega_N\mathcal C)^2\big),
\end{equation}
where \((\Omega_N\mathcal C)^2\) denotes ordinary matrix multiplication,
not entrywise squaring.
Put
\begin{equation}\label{eq:delta_alpha_def}
\delta_N:=\det\big(I_m-(\Omega_N\mathcal C)^2\big),
\qquad
\alpha_N:=
\prod_{h=1}^m(1-\rho_h^{2(N+1)})
\prod_{1\le i<j\le m}(1-\rho_i\rho_j)^2.
\end{equation}
Equivalently, \eqref{eq:detTg_exact} and
Proposition~\ref{prop:rectangular_schur} give
\begin{equation}\label{eq:delta_schur}
\delta_N=\Big(\prod_{h=1}^m\rho_h^N\Big)
\Big(\prod_{i,j=1}^m(1-\rho_i\rho_j)\Big)
s_{(N^m)}(\boldsymbol\rho^{-1},\boldsymbol\rho).
\end{equation}
Then the following identity holds without any assumption that \(D_N(F_m)\)
is nonzero:
\begin{equation}\label{eq:finiteN_cross}
\delta_N\,\det\!\Big(\prod_{h=1}^mT_N(f_h)\Big)
=\alpha_N\,D_N(F_m).
\end{equation}
If $D_N(F_m)\ne0$, the ratio
\begin{equation}\label{eq:R_N_def}
R_N:=\frac{\det\big(\prod_{h=1}^m T_N(f_h)\big)}{\det T_N(F_m)},
\end{equation}
is therefore well defined and is given by
\begin{equation}\label{eq:R_N_exact}
R_N
=
\frac{\alpha_N}{\delta_N}.
\end{equation}
If, in addition, \(m\ge2\) and $N\ge m-1$, this ratio agrees with the
$2(m-1)\times2(m-1)$ boundary determinant in
Proposition~\ref{prop:finiteN_reduction}, and hence with the product of the
two $(m-1)\times(m-1)$ determinants in
Corollary~\ref{cor:parity_factorization}.
In particular, if $r=\max_h|\rho_h|<1$, then, along values of $N$ for which
the ratio is defined,
\begin{equation}\label{eq:R_N_exp_small}
R_N = \prod_{1\le i<j\le m}(1-\rho_i\rho_j)^2\;+\;\cO(r^{2N+2}).
\end{equation}
\end{proposition}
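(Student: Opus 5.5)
The plan is to obtain \eqref{eq:detTg_exact} from the Borodin--Okounkov--Geronimo--Case identity applied to the palindromic symbol \(g=\Phi(z)\Phi(z^{-1})\). Everything else then follows by bookkeeping with Proposition~\ref{prop:rectangular_schur} and the case \(m=1\).

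\emph{The Toeplitz determinant of \(g\).} Since \(0<|\rho_h|<1\), the Wiener--Hopf factors are \(g_+=\Phi(z)\) and \(g_-=\Phi(z^{-1})\), with \(g_\pm(0)=1\). Hence the geometric mean is \(G(g)=1\), and the Szeg\H{o} constant is
\[
E(g)=\exp\Bigl(\sum_k k(\log g)_k(\log g)_{-k}\Bigr)=\prod_{i,j}(1-\rho_i\rho_j)^{-1},
\]
exactly as computed in the proof of Theorem~\ref{thm:corner_constant_roots}. The BOGC identity, valid for every \(N\ge1\), gives
\[
D_N(g)=E(g)\det\bigl(I-Q_NH(b)H(\tilde c)Q_N\bigr),
\]
where \(b=g_-/g_+\) and \(c=g_+/g_-\), \(\tilde c(z)=c(z^{-1})\), and \(Q_N\) projects onto the coordinates \(\ge N\). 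Because \(\Phi(z^{-1})/\Phi(z)\) is rational with simple poles at \(z=\rho_h^{-1}\) (the \(\rho_h\) are distinct), its positive Fourier coefficients have the form \(\sum_h \gamma_h\rho_h^{k}\). The residues \(\gamma_h\) come from partial fractions, and I expect them to equal \(\beta_h\) in \eqref{eq:beta_weights}: the factor \(\rho_h^{m-1}\prod_j(1-\rho_h\rho_j)/\prod_{j\ne h}(\rho_h-\rho_j)\) is precisely the residue of \(\Phi(z^{-1})/\Phi(z)\) rewritten in terms of \(\rho_h\). By palindromy, \(\tilde c=b\).

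Consequently \(Q_NH(b)\) factors as \(V\operatorname{diag}(\beta_h\rho_h^{N+1})W^\top\), where the vectors have geometric entries \((\rho_h^{\,j})_{j\ge0}\). The Gram matrix \(\sum_{j\ge0}\rho_i^j\rho_l^j\) of these vectors is the Cauchy matrix \(\mathcal C\). Sylvester's identity \(\det(I-AB)=\det(I-BA)\) then collapses the Fredholm determinant to \(\det(I_m-(\Omega_N\mathcal C)^2)\), which gives \eqref{eq:detTg_exact}. The step I expect to be hardest is tracking the index shift and the sign and residue conventions so that the exponent is exactly \(N+1\) and the weights are exactly \(\beta_h\). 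I would fix both by checking against \(m=1\), where the formula must reduce to
\[
D_N(g)=\frac{1-\rho^{2N+2}}{1-\rho^2}.
\]

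\emph{Schur form \eqref{eq:delta_schur}.} Since \(F_m=(-1)^m(\prod_h\rho_h^{-1})\,g\), we have
\[
D_N(F_m)=(-1)^{mN}\Bigl(\prod_h\rho_h^{-N}\Bigr)D_N(g).
\]
Comparing this with \eqref{eq:rectangular_schur} gives \eqref{eq:delta_schur} immediately.

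\emph{Cross identity \eqref{eq:finiteN_cross}.} Determinants are multiplicative, and the case \(m=1\) of the formula above gives
\[
D_N(f_h)=(-1)^N\rho_h^{-N}\,\frac{1-\rho_h^{2N+2}}{1-\rho_h^2}.
\]
Hence
\[
\det\Bigl(\prod_hT_N(f_h)\Bigr)=(-1)^{mN}\Bigl(\prod_h\rho_h^{-N}\Bigr)\prod_h\frac{1-\rho_h^{2N+2}}{1-\rho_h^2}.
\]
On the other side, \(\alpha_ND_N(F_m)\) equals \((-1)^{mN}\prod_h\rho_h^{-N}\) times \(\alpha_N\delta_N/\prod_{i,j}(1-\rho_i\rho_j)\). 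Using
\[
\prod_{i,j}(1-\rho_i\rho_j)=\prod_h(1-\rho_h^2)\prod_{i<j}(1-\rho_i\rho_j)^2,
\]
both sides agree after multiplying by \(\delta_N\). This is a polynomial identity with no division by \(D_N(F_m)\), so it holds unconditionally. Dividing by \(\delta_N D_N(F_m)\) when \(D_N(F_m)\ne0\) gives \(R_N=\alpha_N/\delta_N\); note that \(\delta_N\ne0\) then holds by \eqref{eq:detTg_exact}.

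\emph{Remaining claims.} Agreement with Proposition~\ref{prop:finiteN_reduction} and Corollary~\ref{cor:parity_factorization} is automatic, since all three expressions equal the same ratio \(R_N\). For \eqref{eq:R_N_exp_small}, note that \(\|\Omega_N\|=\cO(r^{N+1})\) with \(\mathcal C\) fixed, so \(\delta_N=1+\cO(r^{2N+2})\). Likewise \(\alpha_N=\prod_{i<j}(1-\rho_i\rho_j)^2\bigl(1+\cO(r^{2N+2})\bigr)\). Taking the quotient yields \eqref{eq:R_N_exp_small}.
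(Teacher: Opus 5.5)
Your proposal is correct and follows essentially the same route as the paper: you apply the Borodin--Okounkov--Geronimo--Case identity to the palindromic symbol \(g\) (so \(\tilde c=b\)), use the rank-\(m\) factorization \(H(b)=V\Lambda V^\top\) whose Gram matrix is \(\mathcal C\) together with Sylvester's identity, and finish with bookkeeping via the scalar tridiagonal determinant and Proposition~\ref{prop:rectangular_schur}. The one step you left as an expectation is immediate: multiplying the partial fraction expansion \(1/\Phi(z)=\sum_h \rho_h^{m-1}\prod_{j\ne h}(\rho_h-\rho_j)^{-1}(1-\rho_hz)^{-1}\) by \(\Phi(z^{-1})\) gives \(b_n=\sum_h\rho_h^{m-1}\Phi(\rho_h)\prod_{j\ne h}(\rho_h-\rho_j)^{-1}\rho_h^n=\sum_h\beta_h\rho_h^n\) for \(n\ge0\), which is exactly how the paper obtains it.
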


\begin{proof}
Using \eqref{eq:fh_factor}, one checks that
\(
F_m(z)=(-1)^m(\prod_h \rho_h^{-1})\,g(z).
\)
Thus
\[
\det T_N(F_m)=(-1)^{mN}(\prod_h\rho_h^{-1})^N\det T_N(g).
\]
Similarly,
\(
\det(\prod_{h=1}^m T_N(f_h))=\prod_{h=1}^m \det T_N(f_h)
\)
and
\[
\det T_N(f_h)=(-\rho_h^{-1})^N
\det T_N\big((1-\rho_h z)(1-\rho_h z^{-1})\big).
\]
The last determinant is the elementary tridiagonal Toeplitz determinant
\begin{equation}\label{eq:scalar_quad_det}
\det T_N\big((1-\rho z)(1-\rho z^{-1})\big)
=\frac{1-\rho^{2(N+1)}}{1-\rho^2}.
\end{equation}
Consequently \eqref{eq:finiteN_cross}, and then
\eqref{eq:R_N_exact} whenever the ratio is defined, follow from
\eqref{eq:detTg_exact}.  It remains to prove \eqref{eq:detTg_exact}.

The finite Toeplitz matrices above use indices $1,\dots,N$, whereas the
infinite Hankel operators in the Borodin--Okounkov formula use indices
$0,1,\dots$.  With
\[
g_-(z):=\Phi(z^{-1}),\qquad g_+(z):=\Phi(z),
\]
one Wiener--Hopf quotient is
\[
b(z):=\frac{g_-(z)}{g_+(z)}
=\frac{\Phi(z^{-1})}{\Phi(z)}
=\sum_{n\in\mathbb Z}b_nz^n .
\]
The other Wiener--Hopf quotient satisfies
\[
\left(\frac{g_+}{g_-}\right)(z^{-1})=b(z).
\]
Thus the two Hankel operators in the Borodin--Okounkov--Geronimo--Case identity
coincide for this palindromic symbol.  Write
\[
H(b):=[b_{i+j+1}]_{i,j\ge0}.
\]
Here the indices start at zero, whereas those of the finite Hankel matrix
\(H_N(G)=[g_{i+j}]_{i,j=1}^N\) used in
\eqref{eq:HN_JN_definition_intro} start at one.
A partial fraction decomposition gives
\[
\frac{1}{\Phi(z)}
=\sum_{h=1}^m
\frac{\rho_h^{m-1}}{\prod_{j\ne h}(\rho_h-\rho_j)}
\frac{1}{1-\rho_hz}.
\]
Multiplying by \(\Phi(z^{-1})=\prod_j(1-\rho_jz^{-1})\) and expanding
the nonnegative Fourier modes gives, for every \(n\ge0\),
\begin{equation}\label{eq:b_coeff_expansion}
b_n=\sum_{h=1}^m
\frac{\rho_h^{m-1}\Phi(\rho_h)}
{\prod_{j\ne h}(\rho_h-\rho_j)}\,\rho_h^n
=\sum_{h=1}^m \beta_h\rho_h^n.
\end{equation}
Hence \(H(b)\) has rank at most \(m\).  Only the rationality of \(b\) is used
for this rank bound.  The same argument using partial fractions shows that the Hankel
operator associated with any Wiener--Hopf quotient of two polynomials of
degree \(m\) has rank at most \(m\).  The palindromic property
\(g(z)=g(z^{-1})\) is used only to identify the second Hankel operator with
the first.  More precisely,
\[
H(b)=V\Lambda V^\top,
\qquad
V_{p,h}=\rho_h^p\ (p\ge0),
\qquad
\Lambda=\operatorname{diag}(\beta_h\rho_h)_{h=1}^m.
\]
Since \(V^\top V=\mathcal C\), this gives
\(H(b)^2=V\Lambda\mathcal C\Lambda V^\top\).

The Borodin--Okounkov--Geronimo--Case identity \cite[Thm.~1]{BO00}, applied
to this scalar palindromic symbol, gives
\begin{equation}\label{eq:detTg_Hankel}
\det T_N(g)=\det(I-H(b)^2)^{-1}\,
\det(I-Q_NH(b)^2Q_N),
\end{equation}
where $Q_N$ is the projection in $\ell^2(\mathbb Z_{\ge0})$ onto the
coordinates $N,N+1,\dots$.
Both operator determinants in \eqref{eq:detTg_Hankel} are Fredholm
determinants, well defined because \(H(b)\) has finite rank.
A short direct proof of the identity using Jacobi's theorem is given in
\cite[Sec.~2, p.~94]{Bottcher02}.

\pagebreak[3]
The factor involving the geometric mean in the general identity equals \(1\), because the
displayed factorization gives
\(
\exp((\log g)_0)=1.
\)
The first factor in \eqref{eq:detTg_Hankel} is the strong Szeg\H{o} constant
for \(g\),
\[
\det(I-H(b)^2)^{-1}
=\prod_{i,j=1}^m(1-\rho_i\rho_j)^{-1},
\]
as computed in the proof of Theorem~\ref{thm:corner_constant_roots}.

After reindexing the range of \(Q_N\) by \(p=0,1,2,\dots\), the matrix \(V\)
satisfies
\[
(Q_NV)_{N+p,h}=\rho_h^{N+p}=\rho_h^N V_{p,h}.
\]
Since \(\rho_h^N\Lambda_{hh}=\beta_h\rho_h^{N+1}\), the compression
\(Q_NH(b)^2Q_N\) is unitarily equivalent to
\[
V\,(\Omega_N\mathcal C\Omega_N)\,V^\top.
\]
Sylvester's determinant identity and \(V^\top V=\mathcal C\) therefore give
\[
\det(I-Q_NH(b)^2Q_N)
=\det(I_m-\Omega_N\mathcal C\Omega_N\mathcal C)
=\det(I_m-(\Omega_N\mathcal C)^2).
\]
Substitution into \eqref{eq:detTg_Hankel} proves \eqref{eq:detTg_exact}.
Since
\(
D_N(g)=(\prod_h\rho_h^N)
s_{(N^m)}(\boldsymbol\rho^{-1},\boldsymbol\rho)
\)
by the proof of Proposition~\ref{prop:rectangular_schur}, comparison with
\eqref{eq:detTg_exact} gives \eqref{eq:delta_schur}.
Finally, \(\|\Omega_N\mathcal C\|=\cO(r^{N+1})\) implies
\(\det(I_m-(\Omega_N\mathcal C)^2)=1+\cO(r^{2N+2})\), giving
\eqref{eq:R_N_exp_small}.
\end{proof}

\begin{remark}[Repeated roots]\label{rem:confluent_roots}
Equations \eqref{eq:rectangular_schur} and \eqref{eq:R_N_schur} do not require
the \(\rho_h\) to be distinct.  The ratio identity applies whenever its
denominator is nonzero.  The weights in \eqref{eq:beta_weights}, however,
generally diverge when roots coalesce, so an explicit confluent matrix
representation requires replacing the geometric sequences by sequences of
the form \(p^a r^p\).  We give that representation here.

Let \(r_1,\dots,r_s\) be the distinct values among the roots, with
multiplicities \(\nu_1,\dots,\nu_s\), where
\(0<|r_\ell|<1\) and \(\sum_{\ell=1}^s\nu_\ell=m\).  Set
\[
\Phi(z)=\prod_{\ell=1}^s(1-r_\ell z)^{\nu_\ell},
\qquad b(z)=\frac{\Phi(z^{-1})}{\Phi(z)},
\qquad g(z)=\Phi(z)\Phi(z^{-1}).
\]
For \(1\le k\le\nu_\ell\), define
\begin{equation}\label{eq:confluent_partial_fractions}
d_{\ell,k}:=
\frac{(-r_\ell^{-1})^{\nu_\ell-k}}{(\nu_\ell-k)!}
\left.
\frac{d^{\nu_\ell-k}}{dz^{\nu_\ell-k}}
\bigl[(1-r_\ell z)^{\nu_\ell}b(z)\bigr]
\right|_{z=r_\ell^{-1}},
\qquad
p_\ell(x):=\sum_{k=1}^{\nu_\ell}
d_{\ell,k}\binom{x+k-1}{k-1}.
\end{equation}
Here \(\binom{x+k-1}{k-1}\) denotes the polynomial
\((x+1)\cdots(x+k-1)/(k-1)!\), with value \(1\) when \(k=1\).
The derivative is well defined because its argument is analytic at
\(r_\ell^{-1}\).  Partial fractions at the poles \(r_\ell^{-1}\)
give the terms \(d_{\ell,k}(1-r_\ell z)^{-k}\).  The remaining terms
are negative powers of \(z\).  Consequently, for every \(n\ge0\),
\begin{equation}\label{eq:confluent_b_coefficients}
b_n=\sum_{\ell=1}^s p_\ell(n)r_\ell^n,
\qquad \deg p_\ell\le\nu_\ell-1.
\end{equation}

Index the next matrices by the \(m\) pairs \((\ell,a)\) with
\(1\le\ell\le s\) and \(0\le a<\nu_\ell\).  Let \(\Lambda\) be
block diagonal, with blocks
\begin{equation}\label{eq:confluent_Lambda}
\Lambda_{(\ell,a),(\ell,c)}
=\frac{r_\ell\,p_\ell^{(a+c)}(1)}{a!\,c!},
\qquad 0\le a,c<\nu_\ell,
\end{equation}
where derivatives above the degree of \(p_\ell\) are zero.
For \(N\ge0\), define
\begin{equation}\label{eq:confluent_Gram}
(\mathcal G_N)_{(\ell,a),(j,c)}
:=\sum_{p=N}^{\infty}p^{a+c}(r_\ell r_j)^p
=\left.
\left(t\frac{d}{dt}\right)^{a+c}\frac{t^N}{1-t}
\right|_{t=r_\ell r_j}.
\end{equation}
The convention is \(p^0=1\), including at \(p=0\).
The convergent sums in \eqref{eq:confluent_Gram} are thus given by explicit
derivatives of a rational function.  The exact determinant formula is
\begin{equation}\label{eq:confluent_determinant}
D_N(g)=
\prod_{\ell,j=1}^s(1-r_\ell r_j)^{-\nu_\ell\nu_j}
\det\!\bigl(I_m-\Lambda\mathcal G_0\Lambda\mathcal G_N\bigr),
\qquad N\ge1.
\end{equation}

To verify it, put \(V_{p,(\ell,a)}=p^ar_\ell^p\) for \(p\ge0\).
Taylor expansion of \(r_\ell p_\ell(1+u+v)\), together with
\eqref{eq:confluent_b_coefficients}, gives
\[
H(b)=V\Lambda V^\top,
\qquad V^\top V=\mathcal G_0,
\qquad V^\top Q_NV=\mathcal G_N.
\]
Applying Sylvester's determinant identity to the Fredholm factor in
\eqref{eq:detTg_Hankel} yields
\[
\det(I-Q_NH(b)^2Q_N)
=\det\!\bigl(I_m-\Lambda\mathcal G_0\Lambda\mathcal G_N\bigr).
\]
The Szeg\H{o} factor is the product in \eqref{eq:confluent_determinant},
by the same Fourier coefficient calculation as before.  This proves the
formula.  When every \(\nu_\ell=1\), one has
\(p_\ell=\beta_\ell\), \(\Lambda_{\ell\ell}=\beta_\ell r_\ell\),
and \((\mathcal G_N)_{\ell j}=(r_\ell r_j)^N/(1-r_\ell r_j)\).
A cyclic permutation of the factors inside the determinant recovers
\eqref{eq:detTg_exact}.

For repeated roots, define
\(\delta_N:=\det(I_m-\Lambda\mathcal G_0\Lambda\mathcal G_N)\).
The identities \eqref{eq:finiteN_cross} and \eqref{eq:R_N_exact} then
hold with this \(\delta_N\) and with \(\alpha_N\) from
\eqref{eq:delta_alpha_def}, counting every root with its multiplicity.
Thus \eqref{eq:confluent_determinant} gives the confluent value of the
determinant, without taking limits of the divergent individual weights.
For fixed roots, each entry of \(\mathcal G_N\) is a polynomial in \(N\)
times \((r_\ell r_j)^N\).  This accounts for the polynomial factors in
the error estimate.  In particular, for any fixed \(r_+\) satisfying
\(\max_\ell|r_\ell|<r_+<1\), the error in the limiting ratio is
\(\cO(r_+^{\,2N})\).
\end{remark}

\begin{remark}[Explicit leading correction for finite $N$]\label{rem:leading_finiteN}
Under the assumptions of Proposition~\ref{prop:finiteN_root}, and for values
of \(N\) for which
\(D_N(F_m)\ne0\), let
$r:=\max_{1\le h\le m}|\rho_h|<1$.
Since \(\|\Omega_N\mathcal C\|=\cO(r^{N+1})\), expansion of the
\(m\times m\) determinant gives
\begin{equation}\label{eq:det_inv_expand}
\det(I_m-(\Omega_N\mathcal C)^2)^{-1}
=1+\operatorname{Tr}((\Omega_N\mathcal C)^2)+\cO(r^{4N+4}).
\end{equation}
Substituting this expansion into \eqref{eq:R_N_exact} gives the explicit
first correction:
\begin{equation}\label{eq:R_N_leading}
R_N
=\prod_{1\le i<j\le m}(1-\rho_i\rho_j)^2\Big(1-\sum_{h=1}^m \rho_h^{2(N+1)}+\operatorname{Tr}((\Omega_N\mathcal C)^2)+\cO(r^{4N+4})\Big).
\end{equation}
The trace can be written in closed form as
\begin{equation}\label{eq:trace_OmegaC2}
\operatorname{Tr}((\Omega_N\mathcal C)^2)=\sum_{i,j=1}^m
\frac{\beta_i\beta_j(\rho_i\rho_j)^{N+1}}
{(1-\rho_i\rho_j)^2}.
\end{equation}
\end{remark}

\paragraph{Numerical verification.}
For the domino parameters \(a_h=2\ii\cos(h\pi/(2m+1))\), we computed the
determinants in \eqref{eq:R_N_def} from their complex phase and logarithmic
modulus and evaluated the expression in \eqref{eq:R_N_exact} using the
\(m\times m\) determinant that defines \(\delta_N\).  Both calculations used
IEEE double precision and agree to the expected accuracy of double precision
arithmetic, as shown in
\Cref{tab:RN}.

\begin{table}[H]
\centering
\begin{tabular}{ccccc}
\hline
$m$ & $N$ & $R_N$ (direct) & $R_N$ (roots) & relative error \\
\hline
2 & 20 & 1.8279586179 & 1.8279586179 & $4.13\times10^{-15}$ \\
3 & 30 & 5.9768427840 & 5.9768427840 & $1.49\times10^{-15}$ \\
4 & 40 & 34.6072635930 & 34.6072635930 & $3.10\times10^{-14}$ \\
\hline
\end{tabular}
\caption{The two evaluations of the ratio \(R_N\), from \eqref{eq:R_N_def} and
from \eqref{eq:R_N_exact}, for the domino parameters
\(a_h=2\ii\cos(h\pi/(2m+1))\), in IEEE double precision.}
\label{tab:RN}
\end{table}

The ancillary script \texttt{verify\_toeplitz\_determinants.py} recomputes the
values in \Cref{tab:RN} in IEEE double precision and the convergence data of
\Cref{fig:convergence} with 120 digits of precision.  It also checks
\eqref{eq:ToeplitzHankelIdentity} for random complex coefficients and verifies
the determinant identity \eqref{eq:dual_cauchy_toeplitz}, obtained from the
dual Cauchy identity, for several rectangular sizes.  The script is supplied as an ancillary file with this
preprint.
\Cref{fig:convergence} plots the relative error of \(R_N\) against its limit
over a range of \(N\), together with the leading correction
\eqref{eq:R_N_leading}.  The determinant data in the figure were recomputed
independently with 120 digits of precision to resolve the exponentially small
tail.

\begin{figure}[!ht]
\centering
\begin{tikzpicture}
\begin{semilogyaxis}[
  width=11.4cm, height=7.2cm,
  xlabel={$N$}, ylabel={$\bigl|R_N/\kappa_m-1\bigr|$},
  xmin=0, xmax=46, ymin=1e-12, ymax=6,
  xtick={0,8,16,24,32,40}, minor xtick={4,12,20,28,36,44},
  ytick={1e0,1e-2,1e-4,1e-6,1e-8,1e-10,1e-12},
  grid=both, grid style={gray!16}, tick label style={font=\small},
  label style={font=\small},
  legend style={font=\small,at={(0.025,0.03)},anchor=south west,draw=gray!50,
                fill=white,cells={anchor=west},row sep=0.5pt},
  legend cell align=left,
]
\addplot[accent,thick,mark=*,mark size=1.5pt] coordinates {
(2,0.5441355936)(4,0.3435552548)(6,0.1507321666)(8,0.05239297003)(10,0.01642205547)
(12,0.004957447926)(14,0.001477522001)(16,0.0004384579183)(18,0.0001299188907)
(20,3.84756872e-5)(22,1.139243216e-5)(24,3.37298976e-6)(26,9.9862285e-7)
(28,2.956536431e-7)(30,8.753124013e-8)(32,2.591446013e-8)(34,7.672218079e-9)
(36,2.271431188e-9)(38,6.724781493e-10)(40,1.990933474e-10)(42,5.894341726e-11)
(44,1.745074083e-11)};
\addlegendentry{$m=2$\ \ ($q_*=0.7376$)}
\addplot[cornercol,thick,mark=square*,mark size=1.4pt] coordinates {
(2,0.939962263)(4,0.9276191994)(6,0.8771601147)(8,0.7721873257)(10,0.5992879746)
(12,0.3893926949)(14,0.2110974095)(16,0.1002558784)(18,0.04419070859)
(20,0.01879102292)(22,0.00786391749)(24,0.00326857576)(26,0.00135464533)
(28,0.0005607473481)(30,0.0002320004319)(32,9.596617073e-5)(34,3.969253792e-5)
(36,1.641660403e-5)(38,6.789705496e-6)(40,2.808120008e-6)(42,1.161392906e-6)
(44,4.803327024e-7)};
\addlegendentry{$m=3$\ \ ($q_*=0.8019$)}
\addplot[black!72,thick,mark=triangle*,mark size=1.9pt] coordinates {
(4,0.9949306144)(6,0.9936316945)(8,0.9901429574)(10,0.9829564268)(12,0.9688562686)
(14,0.9419303847)(16,0.892514424)(18,0.8080116419)(20,0.6796654195)(22,0.5161044006)
(24,0.3486661203)(26,0.2116364762)(28,0.1186037958)(30,0.06317495159)
(32,0.03268564315)(34,0.01664834839)(36,0.008411060085)(38,0.004231802608)
(40,0.002124647462)(42,0.001065585961)(44,0.0005341448738)};
\addlegendentry{$m=4$\ \ ($q_*=0.8413$)}
\addplot[white,line width=1.6pt,forget plot] coordinates {
(8,0.05552098841)(10,0.01671690696)(12,0.004983972163)(14,0.001479868818)
(16,0.0004386643356)(18,0.0001299370072)(20,3.847727595e-5)(22,1.139257145e-5)
(24,3.373001969e-6)(26,9.986239202e-7)(28,2.956537369e-7)(30,8.753124836e-8)
(32,2.591446085e-8)(34,7.672218142e-9)(36,2.271431193e-9)(38,6.724781498e-10)
(40,1.990933474e-10)(42,5.894341726e-11)(44,1.745074084e-11)};
\addplot[black,dash pattern=on 2.6pt off 2.2pt,line width=.6pt] coordinates {
(8,0.05552098841)(10,0.01671690696)(12,0.004983972163)(14,0.001479868818)
(16,0.0004386643356)(18,0.0001299370072)(20,3.847727595e-5)(22,1.139257145e-5)
(24,3.373001969e-6)(26,9.986239202e-7)(28,2.956537369e-7)(30,8.753124836e-8)
(32,2.591446085e-8)(34,7.672218142e-9)(36,2.271431193e-9)(38,6.724781498e-10)
(40,1.990933474e-10)(42,5.894341726e-11)(44,1.745074084e-11)};
\addlegendentry{leading correction \eqref{eq:R_N_leading}}
\addplot[white,line width=1.6pt,forget plot] coordinates {
(16,0.1114884444)(18,0.04624434057)(20,0.01915269111)(22,0.007926557557)
(24,0.003279347204)(26,0.00135649192)(28,0.000561063508)(30,0.000232054533)
(32,9.597542635e-5)(34,3.969412122e-5)(36,1.641687487e-5)(38,6.789751823e-6)
(40,2.808127933e-6)(42,1.161394262e-6)(44,4.803329342e-7)};
\addplot[black,dash pattern=on 2.6pt off 2.2pt,line width=.6pt,forget plot] coordinates {
(16,0.1114884444)(18,0.04624434057)(20,0.01915269111)(22,0.007926557557)
(24,0.003279347204)(26,0.00135649192)(28,0.000561063508)(30,0.000232054533)
(32,9.597542635e-5)(34,3.969412122e-5)(36,1.641687487e-5)(38,6.789751823e-6)
(40,2.808127933e-6)(42,1.161394262e-6)(44,4.803329342e-7)};
\addplot[white,line width=1.6pt,forget plot] coordinates {
(26,0.2684741711)(28,0.1345695211)(30,0.06743666585)(32,0.03379047091)
(34,0.01693030298)(36,0.008482429855)(38,0.004249792826)(40,0.002129172696)
(42,0.001066723021)(44,0.0005344304312)};
\addplot[black,dash pattern=on 2.6pt off 2.2pt,line width=.6pt,forget plot] coordinates {
(26,0.2684741711)(28,0.1345695211)(30,0.06743666585)(32,0.03379047091)
(34,0.01693030298)(36,0.008482429855)(38,0.004249792826)(40,0.002129172696)
(42,0.001066723021)(44,0.0005344304312)};
\end{semilogyaxis}
\end{tikzpicture}
\caption{Approach of the ratio $R_N$ of \eqref{eq:R_N_def} to its limit
$\kappa_m=\prod_{i<j}(1-\rho_i\rho_j)^2$, for the domino parameters
$a_h=2\ii\cos(h\pi/(2m+1))$ and even $N$.  Markers: the relative error
computed from the determinants themselves.  Dashed: the leading correction
\eqref{eq:R_N_leading}, plotted where it is below $1$.  The tails are asymptotically straight:
Corollary~\ref{cor:domino_leading_sign} gives slope $2\log_{10}q_*$ for
$\log_{10}|R_N/\kappa_m-1|$ as a function of $N$.
Here $q_*=\max_hq_h=|\rho_m|$.  Both this rate and the coefficient $A_m$
in \eqref{eq:domino_leading_coefficient} determine the onset of the
asymptotic regime.  All computations
were carried out with 120 digits of precision.}
\label{fig:convergence}
\end{figure}

\section{Rectangular domino tilings}\label{sec:domino}

Rectangular domino tilings provide a concrete application.  The enumeration
used below is classical.  The contribution of this section is the exact
comparison between the product of the \(m\) finite tridiagonal Toeplitz
matrices arising from the Kasteleyn diagonalization and the single banded
Toeplitz matrix formed from their product symbol.
Theorem~\ref{thm:domino_root_correction} expresses the correction through an
\(m\times m\) determinant for every \(m\) and \(n\), and gives its first
exponentially small term for fixed \(m\).  The dual Cauchy identity gives
a Toeplitz determinant equal to the tiling number.  The banded
determinant enters through the stated correction factor.  The identity with Hankel
corrections in Section~\ref{sec:corner} applies directly to the same
comparison in the range \(m\le2n+2\), as recorded in
Remark~\ref{rem:domino_band}.
For background on dimer models and on the
enumeration of perfect matchings, see \cite{Kenyon09,Propp99,Strehl01} and, for the
plane lattice specifically, \cite{FisherStephenson63}.

We use the following form of Gessel's identity in terms of a Toeplitz
determinant, written using the generating functions for complete homogeneous and elementary
symmetric polynomials.  Here an \emph{alphabet} means a finite list of
variables.  For \(x=(x_1,\dots,x_p)\) and \(y=(y_1,\dots,y_q)\), write
\[
\mathcal H(y;z)=\prod_{j=1}^q(1-y_jz)^{-1}
=\sum_{r\ge0}h_r(y)z^r,
\qquad
E(x;z)=\prod_{i=1}^p(1+x_iz)
=\sum_{r\ge0}e_r(x)z^r.
\]
Thus \(h_r\) and \(e_r\) are, respectively, the complete homogeneous and
elementary symmetric polynomials.  The definitions
\(T_N(G)=[g_{i-j}]_{i,j=1}^N\) and \(D_N(G)=\det T_N(G)\) extend verbatim to
a formal Laurent series \(G(z)=\sum_kg_kz^k\) whenever its coefficients are
well defined.  In this sense, Gessel's formula gives
\begin{equation}\label{eq:gessel}
D_N\!\big(\mathcal H(y;z^{-1})E(x;z)\big)=
\sum_{\lambda:\ \lambda_1\le N}s_\lambda(x)s_{\lambda'}(y),
\end{equation}
where a partition \(\lambda=(\lambda_1\ge\lambda_2\ge\cdots)\) is a finite
weakly decreasing sequence of nonnegative integers, \(\lambda'\) is the
partition obtained by transposing its Young diagram, and \(s_\lambda\) is the
Schur polynomial.  In particular, \(s_\lambda(x)=0\) if \(\lambda\) has more
than \(p\) nonzero parts.  We also use the dual Cauchy identity
\begin{equation}\label{eq:dual_cauchy}
\sum_\lambda s_\lambda(x)s_{\lambda'}(y)
=\prod_{i=1}^p\prod_{j=1}^q(1+x_iy_j).
\end{equation}
See \cite[Ch.~I]{Macdonald95} and \cite{Gessel90}.  For the asymptotic use of
\eqref{eq:gessel}, see \cite{BDJ99}.  Toeplitz minors correspond
more generally to skew Schur generating functions \cite{BD02}.  Related
minors of matrices of the form Toeplitz \(+\) Hankel or Toeplitz \(-\) Hankel
occur in matrix models for classical groups \cite{GGT20,GGTTH20}.

Let \(m,n\ge1\).  As in the introduction, let \(Z_{m,n}\) denote the number
of tilings of a \(2m\times2n\) board of unit squares by \(2\times1\)
dominoes.  Set
\[
\xi_{h,m}:=\cos\frac{h\pi}{2m+1},\qquad
\eta_{k,n}:=\cos\frac{k\pi}{2n+1},
\]
and define
\(
\mathsf A_h:=4\xi_{h,m}^2
\)
and
\(
\mathsf B_k:=4\eta_{k,n}^2.
\)
The Kasteleyn--Temperley--Fisher product \cite{Kas61,TF61,Fisher61} is
\begin{equation}\label{eq:Zmn}
Z_{m,n}=\prod_{h=1}^{m}\prod_{k=1}^{n}\big(\mathsf A_h+\mathsf B_k\big).
\end{equation}

\paragraph{Toeplitz determinant from the dual Cauchy identity.}
Put
\[
x=(\mathsf A_1,\dots,\mathsf A_m),\qquad
y=(\mathsf B_1^{-1},\dots,\mathsf B_n^{-1}),\qquad
\mathcal B_n:=\prod_{k=1}^n\mathsf B_k=1.
\]
To see the last equality, put \(\theta_k=k\pi/(2n+1)\) and use
\(2\cos\theta_k=\sin(2\theta_k)/\sin\theta_k\).
Reflecting those angles \(2\theta_k\) that exceed \(\pi/2\) about
\(\pi/2\) gives a permutation of the angles \(\theta_k\), \(1\le k\le n\).
Thus \(\prod_{k=1}^n2\cos\theta_k=1\), and squaring gives
\(\mathcal B_n=1\).
Apply \eqref{eq:gessel} with the two alphabets interchanged and \(N=m\).
With \(\mu=\lambda'\), the expression on the right becomes
\[
\sum_{\mu:\,\ell(\mu)\le m}s_\mu(x)s_{\mu'}(y),
\]
where \(\ell(\mu)\) is the number of nonzero parts of \(\mu\).  Because \(x\)
has \(m\) variables, the same restriction is already imposed by
\(s_\mu(x)=0\).  Equation~\eqref{eq:dual_cauchy} therefore gives
\begin{equation}\label{eq:dual_cauchy_toeplitz}
Z_{m,n}
=D_m\!\Big(\mathcal B_n\,\mathcal H(x;z^{-1})\,E(y;z)\Big).
\end{equation}
Indeed, before multiplication by \(\mathcal B_n\), the determinant equals
\(\prod_{h,k}(1+\mathsf A_h/\mathsf B_k)\).  Multiplying its symbol by
\(\mathcal B_n\) multiplies an \(m\times m\) determinant by
\(\mathcal B_n^m\), yielding \eqref{eq:Zmn}.
Equation~\eqref{eq:dual_cauchy_toeplitz} is valid formally at the level of
Laurent coefficients.  For an analytic interpretation on \(\bbT\), replace
\(x_h\) by \(t^{-1}x_h\) and \(y_k\) by \(t y_k\), with
\(t>\max_h|x_h|\).  The resulting Laurent coefficients satisfy
\(\widetilde g_k=t^kg_k\), so the two Toeplitz matrices are diagonally
similar and have the same determinant.  The rescaling also makes the
geometric series in \(\mathcal H(t^{-1}x;z^{-1})\) converge on \(\bbT\).

\paragraph{Chebyshev representation and tridiagonal Toeplitz product.}
Let \(U_r\) denote the Chebyshev polynomial of the second kind, normalized
by
\[
U_r(\cos\theta)=\frac{\sin((r+1)\theta)}{\sin\theta}.
\]
For every \(\xi\),
\[
\det T_{2n}(z+z^{-1}+2\ii\xi)=U_{2n}(\ii\xi)
=(-1)^n\prod_{k=1}^n(4\xi^2+\mathsf B_k),
\]
where the last equality pairs the roots of $U_{2n}$ at
$\cos(k\pi/(2n+1))$ and $-\cos(k\pi/(2n+1))$.
Therefore, with $a_h:=2\ii\xi_{h,m}$,
\begin{equation}\label{eq:Z_cheb_prod}
Z_{m,n}=(-1)^{mn}\det\!\Big(\prod_{h=1}^{m}T_{2n}\!\big(z+z^{-1}+a_h\big)\Big).
\end{equation}
The sign cancels in the normalized ratio
\((-1)^{mn}Z_{m,n}/D_{2n}(F_m)\), but is required in the exact identity when
\(mn\) is odd.

\begin{theorem}[Exact correction for rectangular domino tilings]
\label{thm:domino_root_correction}
Let \(m,n\ge1\).  For \(h=1,\dots,m\), abbreviate
\begin{equation}\label{eq:domino_q}
\xi_h:=\cos\frac{h\pi}{2m+1},
\qquad
q_h:=\sqrt{1+\xi_h^2}-\xi_h\in(0,1).
\end{equation}
The numbers \(q_1,\dots,q_m\) are pairwise distinct.  Define
\[
F_m(z):=\prod_{h=1}^m(z+z^{-1}+2\ii\xi_h),
\]
and put
\begin{align}
\mathcal C^{\mathrm{dom}}_{hj}&:=\frac{1}{1+q_hq_j},\label{eq:C_dom}\\
\gamma_h&:=
q_h^{m-1}(1+q_h^2)
\prod_{\substack{j=1\\j\ne h}}^m
\frac{1+q_hq_j}{q_h-q_j}
=q_h^{m-1}
\frac{\prod_{j=1}^m(1+q_hq_j)}
{\prod_{j\ne h}(q_h-q_j)},\label{eq:gamma_dom}\\
\mathcal E_{m,n}&:=
\operatorname{diag}\big(
\gamma_1q_1^{2n+1},\dots,\gamma_mq_m^{2n+1}\big),\label{eq:E_dom}\\
\kappa_m&:=\prod_{1\le h<j\le m}(1+q_hq_j)^2.\label{eq:kappa_dom}
\end{align}
Then the following identity holds without any nonvanishing assumption on
\(D_{2n}(F_m)\):
\begin{equation}\label{eq:domino_root_cross}
\det\!\big(I_m+(\mathcal E_{m,n}\mathcal C^{\mathrm{dom}})^2\big)
\,(-1)^{mn}Z_{m,n}
=
\left(\prod_{h=1}^m(1+q_h^{4n+2})\right)
\kappa_m\,D_{2n}(F_m).
\end{equation}
As in \eqref{eq:detTg_exact}, the square denotes ordinary matrix
multiplication, not entrywise squaring.
Whenever $D_{2n}(F_m)\ne0$,
\begin{equation}\label{eq:domino_root_exact}
\frac{(-1)^{mn}Z_{m,n}}{D_{2n}(F_m)}
=
\frac{
\displaystyle
\left(\prod_{h=1}^m(1+q_h^{4n+2})\right)\kappa_m
}{
\det\!\big(I_m+(\mathcal E_{m,n}\mathcal C^{\mathrm{dom}})^2\big)
}.
\end{equation}
The quotient in \eqref{eq:domino_root_exact} is defined for all large \(n\).
Indeed, \(z+z^{-1}\) is real on \(\bbT\), whereas \(2\ii\xi_h\) is purely imaginary
and nonzero because \(0<h\pi/(2m+1)<\pi/2\).  Hence \(f_h\) does not vanish on
\(\bbT\), and Theorem~\ref{thm:corner_constant_roots} gives
\(D_{2n}(F_m)\ne0\) for all sufficiently large \(n\).
Consequently, for fixed $m$ and $n\to\infty$,
\begin{equation}\label{eq:domino_root_limit}
\frac{(-1)^{mn}Z_{m,n}}{D_{2n}(F_m)}
=\kappa_m+\cO(q_*^{4n+2}),
\qquad
q_*:=\max_h q_h
=e^{-\operatorname{arsinh}(\sin(\pi/(4m+2)))}<1 .
\end{equation}
Here \(\operatorname{arsinh}\) denotes the inverse hyperbolic sine.
More precisely,
\begin{align}
\frac{(-1)^{mn}Z_{m,n}}{D_{2n}(F_m)}
=\kappa_m\Bigg[
1+\sum_{h=1}^m q_h^{4n+2}
-\sum_{h,j=1}^m
\frac{\gamma_h\gamma_j(q_hq_j)^{2n+1}}
{(1+q_hq_j)^2}
+\cO(q_*^{8n+4})
\Bigg].
\label{eq:domino_root_first}
\end{align}
All asymptotic estimates are for fixed $m$.
\end{theorem}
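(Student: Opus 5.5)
The plan is to specialize Proposition~\ref{prop:finiteN_root} with \(N=2n\) and \(a_h=2\ii\xi_h\), and to combine it with the Kasteleyn--Temperley--Fisher representation \eqref{eq:Z_cheb_prod}. First we identify the roots. We need \(\rho_h\) with \(\rho_h^2+2\ii\xi_h\rho_h+1=0\) and \(|\rho_h|<1\). Substituting \(\rho=\ii q\) gives \(-q^2-2\xi_hq+1=0\), that is, \(q^2+2\xi_hq-1=0\), whose positive root is \(q_h=\sqrt{1+\xi_h^2}-\xi_h\). Since \(\xi_h\in(0,1)\), we get \(q_h\in(0,1)\), so \(\rho_h=\ii q_h\) is the root in the open disk. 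The map \(\xi\mapsto\sqrt{1+\xi^2}-\xi\) is strictly decreasing, and the \(\xi_h\) are distinct, so the \(q_h\) are pairwise distinct and nonzero. All hypotheses of Proposition~\ref{prop:finiteN_root} therefore hold. The same monotonicity shows that \(q_*=q_m\) corresponds to \(\xi_m=\cos(m\pi/(2m+1))=\sin(\pi/(4m+2))\). Writing \(\xi=\sinh u\), we have \(\sqrt{1+\xi^2}-\xi=e^{-u}\), which gives the arsinh expression.

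Next we translate each ingredient. We have \(\rho_i\rho_j=-q_iq_j\), so \(\mathcal C=\mathcal C^{\mathrm{dom}}\), \(\prod_{i<j}(1-\rho_i\rho_j)^2=\kappa_m\), and \(1-\rho_h^{2(2n+1)}=1-(-1)^{2n+1}q_h^{4n+2}=1+q_h^{4n+2}\). For the weights,
\[
\beta_h=\rho_h^{m-1}\frac{\prod_j(1-\rho_h\rho_j)}{\prod_{j\ne h}(\rho_h-\rho_j)}
=\ii^{m-1}q_h^{m-1}\frac{\prod_j(1+q_hq_j)}{\ii^{m-1}\prod_{j\ne h}(q_h-q_j)}=\gamma_h .
\]
The diagonal entries of \(\Omega_{2n}\) are therefore \(\gamma_h(\ii q_h)^{2n+1}=\ii(-1)^n\gamma_hq_h^{2n+1}\), so \(\Omega_{2n}=\ii(-1)^n\mathcal E_{m,n}\). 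Squaring, \((\Omega_{2n}\mathcal C)^2=-(\mathcal E_{m,n}\mathcal C^{\mathrm{dom}})^2\), and hence \(\delta_{2n}=\det(I_m+(\mathcal E_{m,n}\mathcal C^{\mathrm{dom}})^2)\). Substituting these identifications into \eqref{eq:finiteN_cross} with \(N=2n\), and multiplying by \((-1)^{mn}\) via \eqref{eq:Z_cheb_prod}, yields \eqref{eq:domino_root_cross}. Dividing then gives \eqref{eq:domino_root_exact}. Nonvanishing of the denominator for large \(n\) follows, as the statement indicates, from Theorem~\ref{thm:corner_constant_roots}.

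For the asymptotics, we have \(\|\mathcal E_{m,n}\mathcal C^{\mathrm{dom}}\|=\cO(q_*^{2n+1})\). This gives \(\det(I+X^2)^{-1}=1-\operatorname{Tr}X^2+\cO(q_*^{8n+4})\) with \(X=\mathcal E_{m,n}\mathcal C^{\mathrm{dom}}\), by the same expansion as in Remark~\ref{rem:leading_finiteN}. In addition, \(\prod_h(1+q_h^{4n+2})=1+\sum_hq_h^{4n+2}+\cO(q_*^{8n+4})\). The trace evaluates as in \eqref{eq:trace_OmegaC2}:
\[
\operatorname{Tr}X^2=\sum_{h,j}\frac{\gamma_h\gamma_j(q_hq_j)^{2n+1}}{(1+q_hq_j)^2}.
\]
Multiplying these expansions, the cross terms are \(\cO(q_*^{8n+4})\), which gives \eqref{eq:domino_root_first} and hence \eqref{eq:domino_root_limit}.

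The main obstacle is only bookkeeping: keeping the powers of \(\ii\) and the parity signs straight. The two places to watch are the cancellation of \(\ii^{m-1}\) in \(\beta_h\) and the identity \((\ii(-1)^n)^2=-1\), which turns \(I-(\Omega\mathcal C)^2\) into \(I+(\mathcal E\mathcal C^{\mathrm{dom}})^2\). We also need to confirm that the overall sign \((-1)^{mn}\) is exactly the one in \eqref{eq:Z_cheb_prod}.
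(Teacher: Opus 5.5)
Your proposal is correct and follows the paper's proof essentially step for step. You specialize Proposition~\ref{prop:finiteN_root} at $N=2n$ with $\rho_h=\ii q_h$, verify $\beta_h=\gamma_h$, $\mathcal C=\mathcal C^{\mathrm{dom}}$ and $\Omega_{2n}=\ii(-1)^n\mathcal E_{m,n}$, combine with \eqref{eq:Z_cheb_prod}, and expand the scalar product and the inverse determinant to first order. The one detail worth stating explicitly is that dividing by $\det(I_m+(\mathcal E_{m,n}\mathcal C^{\mathrm{dom}})^2)$ is legitimate whenever $D_{2n}(F_m)\ne0$. This is already contained in \eqref{eq:R_N_exact}, or in \eqref{eq:detTg_exact}, since $\delta_{2n}$ and $D_{2n}(F_m)$ vanish together.
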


\begin{proof}
The identity
\[
\sqrt{1+x^2}-x=e^{-\operatorname{arsinh}x}
\]
shows that \(q_h=e^{-\operatorname{arsinh}\xi_h}\).  The sequence
\(\xi_1,\dots,\xi_m\) is strictly decreasing and positive, so
\(q_1,\dots,q_m\) is strictly increasing and hence pairwise distinct.
Moreover,
\[
\xi_m=\sin\frac{\pi}{4m+2},
\]
which proves the stated formula for \(q_* = q_m\).

For $a_h=2\ii\xi_h$, the root of $z^2+a_hz+1$ in the open unit disk is
\[
\rho_h=\ii q_h.
\]
Substitution into \eqref{eq:beta_weights} gives \(\beta_h=\gamma_h\), while
\[
\mathcal C=\mathcal C^{\mathrm{dom}},
\qquad
\Omega_{2n}=\ii(-1)^n\mathcal E_{m,n}.
\]
Therefore
\[
\det\!\big(I_m-(\Omega_{2n}\mathcal C)^2\big)
=
\det\!\big(I_m+(\mathcal E_{m,n}\mathcal C^{\mathrm{dom}})^2\big),
\]
and
\[
1-\rho_h^{4n+2}=1+q_h^{4n+2},
\qquad
1-\rho_h\rho_j=1+q_hq_j.
\]
Combining Proposition~\ref{prop:finiteN_root} with
\eqref{eq:Z_cheb_prod} proves \eqref{eq:domino_root_cross} and
\eqref{eq:domino_root_exact}.

Finally,
\[
\prod_h(1+q_h^{4n+2})
=1+\sum_h q_h^{4n+2}+\cO(q_*^{8n+4}),
\]
and
\[
\det\!\big(I_m+(\mathcal E_{m,n}\mathcal C^{\mathrm{dom}})^2\big)^{-1}
=1-\operatorname{Tr}\big((\mathcal E_{m,n}\mathcal C^{\mathrm{dom}})^2\big)
+\cO(q_*^{8n+4}).
\]
Expanding the trace gives \eqref{eq:domino_root_first}.
\end{proof}

\begin{corollary}[Leading term for fixed width]\label{cor:domino_leading_sign}
Under the notation of Theorem~\ref{thm:domino_root_correction}, let
\(m\ge2\) be fixed and define
\begin{equation}\label{eq:domino_leading_coefficient}
A_m:=\frac{\gamma_m^2}{(1+q_m^2)^2}-1
=\prod_{j=1}^{m-1}
\left(\frac{q_m(1+q_mq_j)}{q_m-q_j}\right)^2-1.
\end{equation}
Then \(A_m>0\), and, as \(n\to\infty\),
\begin{equation}\label{eq:domino_leading_signed}
\frac{(-1)^{mn}Z_{m,n}}{\kappa_mD_{2n}(F_m)}
=1-A_mq_m^{4n+2}+o(q_m^{4n+2}).
\end{equation}
In particular, the normalized ratio approaches \(1\) from below for all
sufficiently large \(n\).
\end{corollary}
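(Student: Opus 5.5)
The plan is to read off the leading term directly from the expansion \eqref{eq:domino_root_first} of Theorem~\ref{thm:domino_root_correction}, divided by \(\kappa_m\). First, recall from the proof of that theorem that \(0<q_1<q_2<\cdots<q_m=q_*<1\). Also \(\kappa_m>0\), and \(D_{2n}(F_m)\ne0\) for all large \(n\), so the normalized ratio in \eqref{eq:domino_leading_signed} is defined for large \(n\). The bracket in \eqref{eq:domino_root_first} then gives
\[
\frac{(-1)^{mn}Z_{m,n}}{\kappa_mD_{2n}(F_m)}
=1+\sum_{h=1}^m q_h^{4n+2}-\sum_{h,j=1}^m\frac{\gamma_h\gamma_j(q_hq_j)^{2n+1}}{(1+q_hq_j)^2}+\cO(q_m^{8n+4}).
\]

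Next I isolate the terms of exact order \(q_m^{4n+2}\). In the single sum, every term with \(h<m\) is \(q_h^{4n+2}=o(q_m^{4n+2})\), because \(q_h<q_m\). In the double sum, \((q_hq_j)^{2n+1}=q_m^{4n+2}\) only when \(h=j=m\). Every other pair satisfies \(q_hq_j\le q_mq_{m-1}<q_m^2\), so its term is \(o(q_m^{4n+2})\); the prefactors \(\gamma_h\gamma_j/(1+q_hq_j)^2\) are fixed because \(m\) is fixed. The remainder \(\cO(q_m^{8n+4})\) is also \(o(q_m^{4n+2})\). The surviving coefficient of \(q_m^{4n+2}\) is therefore \(1-\gamma_m^2/(1+q_m^2)^2=-A_m\), which proves \eqref{eq:domino_leading_signed}.

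For the product form of \(A_m\), I use the first expression in \eqref{eq:gamma_dom} with \(h=m\). This gives
\[
\frac{\gamma_m}{1+q_m^2}=q_m^{m-1}\prod_{j=1}^{m-1}\frac{1+q_mq_j}{q_m-q_j}=\prod_{j=1}^{m-1}\frac{q_m(1+q_mq_j)}{q_m-q_j}.
\]
Squaring this yields \eqref{eq:domino_leading_coefficient}.

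The only step that needs an actual inequality is \(A_m>0\), and it is elementary. For each \(j\le m-1\), the denominator \(q_m-q_j\) is positive. The factor exceeds \(1\) because
\[
q_m(1+q_mq_j)=q_m+q_m^2q_j>q_m>q_m-q_j,
\]
where the last inequality uses \(q_j>0\). Since \(m\ge2\), the product is over a nonempty set of factors, each greater than \(1\). Hence its square exceeds \(1\), and \(A_m>0\).

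Finally, \(A_mq_m^{4n+2}>0\) dominates the \(o(q_m^{4n+2})\) error for large \(n\). The normalized ratio is therefore strictly less than \(1\) for all sufficiently large \(n\), and tends to \(1\) from below. I expect no real obstacle; the only care needed is the strict separation \(q_mq_{m-1}<q_m^2\), which relies on the roots being distinct.
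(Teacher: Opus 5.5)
Your argument is the paper's proof. You isolate the $h=m$ term of the single sum and the $h=j=m$ term of the double sum in \eqref{eq:domino_root_first}, take the product form of $A_m$ from \eqref{eq:gamma_dom}, and bound each factor above $1$; your inequality $q_m+q_m^2q_j>q_m>q_m-q_j$ is equivalent to the paper's identity $q_m(1+q_mq_j)/(q_m-q_j)=1+q_j(1+q_m^2)/(q_m-q_j)$.

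The one step you leave implicit is that the normalized ratio is real. Since $F_m$ has nonreal coefficients and the remainder in \eqref{eq:domino_root_first} is only controlled in modulus, the conclusion \emph{strictly less than $1$} needs this. The paper supplies it by noting that the exact expression \eqref{eq:domino_root_exact} is built from the real quantities $\mathcal E_{m,n}$, $\mathcal C^{\mathrm{dom}}$, $q_h$ and $\kappa_m$, and is therefore real whenever defined.
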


\begin{proof}
Because \(0<q_1<\cdots<q_m<1\), the unique term of largest exponential
order in the first sum of \eqref{eq:domino_root_first} is
\(q_m^{4n+2}\).  In the double sum, it is the term with \(h=j=m\),
which equals \(\gamma_m^2q_m^{4n+2}/(1+q_m^2)^2\).
All other terms, and the remainder \(\cO(q_m^{8n+4})\), are
\(o(q_m^{4n+2})\).  Their difference gives
\eqref{eq:domino_leading_signed}.
The product expression in \eqref{eq:domino_leading_coefficient} follows
from \eqref{eq:gamma_dom}.  For every \(j<m\),
\[
\frac{q_m(1+q_mq_j)}{q_m-q_j}
=1+\frac{q_j(1+q_m^2)}{q_m-q_j}>1,
\]
so \(A_m>0\).  The exact expression \eqref{eq:domino_root_exact}
is real whenever it is defined, since \(\mathcal E_{m,n}\),
\(\mathcal C^{\mathrm{dom}}\), and its scalar factors are real.
It is defined for all sufficiently large \(n\) by
Theorem~\ref{thm:domino_root_correction}, and its leading correction
in \eqref{eq:domino_leading_signed} is strictly negative.
\end{proof}

For widths \(4\), \(8\), and \(16\) (\(m=2,4,8\), respectively), the coefficients are
\[
A_2\approx13.67,\qquad A_4\approx3030.66,\qquad
A_8\approx2.16051\times10^8.
\]
The size of the leading relative correction is \(A_mq_m^{4n+2}\).
Both the coefficient \(A_m\) and the rate \(q_m\) are therefore relevant
when assessing the onset of the asymptotic regime.

\paragraph{Concrete widths.}
The same parameters give a simple exact expression for the tiling number.
Recurrences and rational generating functions for tilings of rectangles of
fixed width are classical \cite{KlarnerPollack80,Stanley85}.
Equations~\eqref{eq:scalar_quad_det} and \eqref{eq:Z_cheb_prod} yield
\begin{equation}\label{eq:Zmn_q_product}
Z_{m,n}=C_m\Lambda_m^n
\prod_{h=1}^m\left(1+q_h^2(q_h^4)^n\right),
\qquad
C_m:=\prod_{h=1}^m\frac{1}{1+q_h^2},
\qquad
\Lambda_m:=\prod_{h=1}^m q_h^{-2}.
\end{equation}
The constants in \eqref{eq:Zmn_q_product} satisfy
\[
\prod_{h=1}^m\bigl(4+4\xi_h^2\bigr)=P_{2m+1},
\qquad
C_m=\sqrt{\frac{\Lambda_m}{P_{2m+1}}},
\]
where \(P_0=0\), \(P_1=1\), \(P_{j+1}=2P_j+P_{j-1}\) are the Pell numbers
\cite{Horadam71}.
Indeed, the pairing of Chebyshev roots used for \eqref{eq:Z_cheb_prod} gives
\(\prod_{h=1}^m(x^2+4\xi_h^2)=(-1)^mU_{2m}(\ii x/2)\).  At \(x=2\), the
sequence \(\ii^{-j}U_j(\ii)\) obeys the Pell recurrence with initial values
\(1\) and \(2\), so \((-1)^mU_{2m}(\ii)=P_{2m+1}\).  The second identity
then follows from \(1+q_h^2=2q_h\sqrt{1+\xi_h^2}\).

Since \(q_1<\cdots<q_m\), for every fixed \(m\ge2\),
\begin{equation}\label{eq:Zmn_fixed_width}
Z_{m,n}=C_m\Lambda_m^n
\left[1+q_m^2(q_m^4)^n
+\cO\big((q_{m-1}^4)^n\big)\right].
\end{equation}
Writing the constants numerically, for \(2m=4,8,16\) these asymptotic
formulas read
\begin{equation}\label{eq:Zmn_concrete}
\begin{aligned}
Z_{2,n}
&=0.5274743291\ldots\,(8.068645869\ldots)^n\\
&\quad{}\times\left[1+0.5441132198\ldots\,(0.2960591960\ldots)^n
 +\cO\big((0.05188240921\ldots)^n\big)\right],\\[2pt]
Z_{4,n}
&=0.2869899903\ldots\,(81.12780576\ldots)^n\\
&\quad{}\times\left[1+0.7078137786\ldots\,(0.5010003452\ldots)^n
 +\cO\big((0.1458980337\ldots)^n\big)\right],\\[2pt]
Z_{8,n}
&=0.08644464036\ldots\,(8494.108436\ldots)^n\\
&\quad{}\times\left[1+0.8317063241\ldots\,(0.6917354095\ldots)^n
 +\cO\big((0.3391104026\ldots)^n\big)\right].
\end{aligned}
\end{equation}
\paragraph{Accuracy of the enumeration formulas.}
The exact product \eqref{eq:Zmn_q_product} applies to every \(m,n\ge1\).
For \(m=4\), it gives the counts for both the \(8\times8\) chessboard and
the \(8\times40\) rectangle:
\begin{equation}\label{eq:chessboard_count}
\begin{aligned}
Z_{4,4}&=12{,}988{,}816,\\
Z_{4,20}&=43\,778\,578\,308\,070\,520\,454\,643\,930\,160\,388\,065\,413.
\end{aligned}
\end{equation}
Both integers were independently checked by a transfer computation using
integer arithmetic.  To assess \eqref{eq:Zmn_concrete}, define
\[
\widehat Z_{4,n}:=C_4\Lambda_4^n(1+q_4^{4n+2}),
\]
which retains the first correction and omits the remainder.
The exact product gives its relative error directly:
\[
\frac{Z_{4,n}-\widehat Z_{4,n}}{Z_{4,n}}
=1-\prod_{h=1}^3(1+q_h^{4n+2})^{-1}>0.
\]
Table~\ref{tab:tiling_accuracy} compares the leading term and this corrected
approximation.  The relative error of \(\widehat Z_{4,n}\) is about
\(0.0176\%\) at \(n=4\) and \(7.29\times10^{-18}\) at \(n=20\).
The constants were evaluated from their exact definitions in
\eqref{eq:domino_q} and \eqref{eq:Zmn_q_product}. The abbreviated decimals
in \eqref{eq:Zmn_concrete} do not retain enough digits to reproduce the
latter accuracy.
The ancillary script \texttt{verify\_toeplitz\_tiling\_comparison.py}
reproduces Table~\ref{tab:tiling_accuracy} and independently checks both
integer counts by transfer enumeration.

\begin{table}[!ht]
\centering
\small
\setlength{\tabcolsep}{4pt}
\renewcommand{\arraystretch}{1.2}
\begin{tabular}{@{}rlll@{}}
\hline
\rule{0pt}{3ex}\(n\) & \(C_4\Lambda_4^n\) & \(\widehat Z_{4,n}\) & Relative error\\
\hline
4 & \(12\,432\,133.628144\)
  & \(12\,986\,525.408831\) & \(1.76351037\times10^{-4}\)\\
20 & \(4.3778547551290596584\times10^{37}\)
   & \(4.3778578308070520135\times10^{37}\)
   & \(7.29480192\times10^{-18}\)\\
\hline
\end{tabular}
\caption{Approximation of the tiling counts in \eqref{eq:chessboard_count}.
The final column is \((Z_{4,n}-\widehat Z_{4,n})/Z_{4,n}\), computed before
rounding the displayed values.}
\label{tab:tiling_accuracy}
\end{table}

\paragraph{Comparison with the banded Toeplitz determinant.}
The formulas for \(Z_{m,n}\) itself are consequences of the classical product.
Theorem~\ref{thm:domino_root_correction} gives the exact comparison with
\(D_{2n}(F_m)\), also for every \(m,n\ge1\) in the form
\eqref{eq:domino_root_cross}.  Combining \eqref{eq:Zmn_q_product} with that
identity shows precisely what its correction determinant measures:
\[
\frac{(-1)^{mn}\kappa_mD_{2n}(F_m)}{C_m\Lambda_m^n}
=\det\!\bigl(I_m+(\mathcal E_{m,n}\mathcal C^{\mathrm{dom}})^2\bigr).
\]
Thus this determinant is the multiplicative correction to the leading term
for \(D_{2n}(F_m)\).  The scalar product in \eqref{eq:Zmn_q_product} plays
the corresponding role for the tiling count.

\begin{samepage}
For \(m=2,4,8\), this gives
\begin{equation}\label{eq:kappa_concrete}
\begin{gathered}
D_{2n}(F_m)\sim\frac{Z_{m,n}}{\kappa_m},\\
\kappa_2=1.828028952\ldots,
\quad
\kappa_4=34.68094838\ldots,
\quad
\kappa_8=11{,}748{,}559.44\ldots.
\end{gathered}
\end{equation}
\end{samepage}
For \(m=n=4\), the banded determinant is
\(D_8(F_4)=37{,}995{,}482.596169\ldots\).
The scalar product and correction determinant are
\[
\begin{aligned}
\prod_{h=1}^4(1+q_h^{18})&=1.044777701761\ldots,\\
\det\!\bigl(I_4+(\mathcal E_{4,4}\mathcal C^{\mathrm{dom}})^2\bigr)
&=105.9930185823\ldots.
\end{aligned}
\]
Equation~\eqref{eq:domino_root_exact} therefore gives
\begin{equation}\label{eq:chessboard_identity}
Z_{4,4}
=D_8(F_4)\,
\frac{\prod_{h=1}^4\bigl(1+q_h^{18}\bigr)\,\kappa_4}
{\det\bigl(I_4+(\mathcal E_{4,4}\mathcal C^{\mathrm{dom}})^2\bigr)}
=12{,}988{,}816=3604^2 .
\end{equation}
The same evaluation at \(n=20\), using \(D_{40}(F_4)\), recovers the second
integer in \eqref{eq:chessboard_count}.
At \(n=4\), replacing the exact ratio \(Z_{4,n}/D_{2n}(F_4)\) by its limit
\(\kappa_4\) overestimates the tiling count by a factor of
\(101.4503\ldots\).  At \(n=20\), the correction determinant is
\(1.002129875\ldots\), and the same replacement has relative error
\(2.12917\times10^{-3}\).  This slower convergence of the determinant ratio,
shown in \Cref{fig:convergence}, is compatible with the much smaller errors
of the enumeration formula in Table~\ref{tab:tiling_accuracy}.

\begin{remark}\label{rem:domino_band}
When \(m\le2n+2\), the matrix identity
\eqref{eq:ToeplitzHankelIdentity} also applies directly to
\eqref{eq:Z_cheb_prod}.  If \(2n\ge m-1\), the resulting reduced determinant
factors further according to
Corollary~\ref{cor:parity_factorization}.
\end{remark}

\Cref{fig:map} summarizes the three evaluations of \(Z_{m,n}\) and their
comparison with the banded Toeplitz determinant \(D_{2n}(F_m)\).

\begin{figure}[!ht]
\centering
\begin{tikzpicture}[
  box/.style={draw=gray!55,rounded corners=2pt,fill=gray!4,align=center,
              inner sep=4pt,font=\small,minimum height=12mm,text width=42mm},
  newbox/.style={box,draw=cornercol,fill=cornerfill!40},
  ar/.style={{Stealth[length=2mm]}-{Stealth[length=2mm]},gray!55!black,line width=.7pt},
  newar/.style={{Stealth[length=2mm]}-{Stealth[length=2mm]},cornercol,line width=1.1pt},
  lab/.style={font=\scriptsize,align=center,inner sep=1.5pt,fill=white},
]
\node[box] (Z) at (0,0)
   {$\displaystyle Z_{m,n}=\prod_{h=1}^m\prod_{k=1}^n(\mathsf A_h+\mathsf B_k)$\\[1pt]
    {\scriptsize Kasteleyn--Temperley--Fisher}};
\node[box] (DC) at (9.4,0)
   {$\displaystyle D_m\!\bigl(\mathcal B_n\mathcal H(x;z^{-1})E(y;z)\bigr)$\\[1pt]
    {\scriptsize $m\times m$ Toeplitz determinant}};
\node[box] (CH) at (0,-3.0)
   {$\displaystyle (-1)^{mn}\det\prod_{h=1}^mT_{2n}(f_h)$\\[1pt]
    {\scriptsize product of tridiagonal sections}};
\node[newbox] (BT) at (9.4,-3.0)
   {$\displaystyle D_{2n}(F_m)$\\[1pt]
    {\scriptsize $2n\times2n$ banded Toeplitz determinant}};
\draw[ar] (Z) -- node[lab,above,yshift=-1pt]{Gessel $+$ dual Cauchy}
                 node[lab,below,yshift=1pt]{\eqref{eq:dual_cauchy_toeplitz}} (DC);
\draw[ar] (Z) -- node[lab,right,xshift=-1pt]
                 {Chebyshev $U_{2n}$\\[-1pt]\eqref{eq:Z_cheb_prod}} (CH);
\draw[newar] (CH) -- node[lab,above,yshift=-1pt,text=cornercol]
                       {\bfseries explicit correction factor}
                     node[lab,below,yshift=1pt,text=cornercol]
                       {\Cref{thm:domino_root_correction}} (BT);
\end{tikzpicture}
\caption{Three evaluations of the number of domino tilings of a
$2m\times2n$ rectangle and the banded Toeplitz determinant compared with them.
Both upper boxes and the lower left box equal $Z_{m,n}$.  The lower right
box is $D_{2n}(F_m)$.  Multiplying it by $(-1)^{mn}$ and the factor on the
right of \eqref{eq:domino_root_exact} gives $Z_{m,n}$.
The top and left links are classical.  The bottom link is established
by combining the comparison of finite sections in
Sections~\ref{sec:corner}--\ref{sec:determinants} with the root formula.  It is
an identity, not an asymptotic statement, and
\eqref{eq:domino_root_exact} expresses its correction factor as an explicit
scalar divided by the determinant of an $m\times m$ matrix.  For fixed $m$,
this matrix dimension is independent of $n$.  Composing the three links
expresses the small Toeplitz determinant $D_m$ in terms of the large one
$D_{2n}$.}
\label{fig:map}
\end{figure}

\section{Conclusion and outlook}\label{sec:outlook}
The classical product formula for finite sections gives explicit boundary
corrections for the tridiagonal family symmetric under inversion.  The method
of images and the iteration of that formula establish the sharp uniform
range \(d\le N+2\).  For \(N\ge d-1\), the determinant ratio, whenever
defined, reduces to a determinant of order \(2(d-1)\), which reversal
symmetry splits into two determinants of order \(d-1\).
For \(P(x)=\prod_h(x+a_h)\), the Schur formulas in
Proposition~\ref{prop:rectangular_schur} also cover repeated roots.
When the selected roots \(\rho_h\) lie in the open unit disk, the limiting ratio is
\eqref{eq:corner_constant_product}.  For pairwise distinct roots, classical
identities for rational symbols give the exact formula
\eqref{eq:finiteN_cross} through an \(m\times m\) determinant and the
leading correction \eqref{eq:R_N_leading}.  The explicit confluent matrix
in Remark~\ref{rem:confluent_roots} extends the exact formula to repeated
roots.

For a \(2m\times2n\) rectangle,
\eqref{eq:dual_cauchy_toeplitz} and
Theorem~\ref{thm:domino_root_correction} relate the
Kasteleyn--Temperley--Fisher product, the product of tridiagonal Toeplitz
determinants in \eqref{eq:Z_cheb_prod}, the Toeplitz determinant derived from
the dual Cauchy identity, and a banded Toeplitz determinant.  The asymptotic
formulas
\eqref{eq:domino_root_limit}--\eqref{eq:domino_root_first} apply with \(m\)
fixed and \(n\to\infty\).  The classical product yields the explicit
fixed-width formulas \eqref{eq:Zmn_concrete} for boards of widths \(4\),
\(8\), and \(16\), while \eqref{eq:chessboard_count} recovers the number of
tilings of the \(8\times8\) chessboard.  Because the correction matrix grows with
\(m\) and \(q_*\to1\), the determinant comparison does not directly cover a
limit with fixed aspect ratio.  The unresolved quantity in that regime is the normalized
comparison \((-1)^{mn}Z_{m,n}/D_{2n}(F_m)\), rather than the asymptotics of
\(Z_{m,n}\) itself, which follow from \eqref{eq:Zmn}.

We close with two possible extensions.  First, the joint limit
\(m,n\to\infty\) requires an
analysis in which the bandwidth and symbol vary with the matrix size.  The
Riemann--Hilbert framework for determinants of sums of Toeplitz and Hankel
matrices
\cite{GharakhlooIts20} may provide a starting point, but it would have to
accommodate the growing boundary blocks.

Second, Toeplitz minors and skew Schur functions arise when boundary
conditions are modified or defects are prescribed in suitable models
invariant under translation \cite{BD02,GGT20}.  In matching models with
reflection symmetry, the factorization theorem \cite{Ciucu97} gives a related
factorization.  Petrov's extension of the
Borodin--Okounkov--Geronimo--Case identity to tilted Toeplitz minors
\cite{Petrov26}, with the column tilts denoted here by \(\zeta_j\), replaces
\(T_N(\varphi)\) by
\(\bigl[(\theta_i\zeta_j\varphi)_{i-j}\bigr]_{i,j=1}^N\), where \(\theta_i\)
and \(\zeta_j\) are analytic functions depending on the row and column,
respectively.  For \(F_m\), which is nonvanishing and has winding number zero,
the Borodin--Okounkov kernel
\(K\) has rank at most \(m\), as observed after
\eqref{eq:b_coeff_expansion}.
Petrov's formula assumes that the tilt functions are bounded and analytic in
their respective domains and that the Gram matrix \(\Gamma_{\zeta,\theta}\) is
invertible.

Under these assumptions, its Fredholm operator is
\(\Pi_VK|_V\), where \(\Pi_V\) is the oblique projection onto the subspace
\(V\).  Its rank is therefore at most \(m\).  The same
Sylvester identity used in the proof of Proposition~\ref{prop:finiteN_root}
expresses the Fredholm factor as the determinant of a matrix of dimension at
most \(m\).  This does not, however, give an explicit formula for general
choices of both families of tilts.  In the present approach, such a formula would require tilted
analogues
of the weights \eqref{eq:beta_weights} and the Cauchy matrix
\eqref{eq:C_matrix}, together with a closed form for
\(\det\Gamma_{\zeta,\theta}\).  Here \(\Gamma_{\zeta,\theta}\) is an
\(N\times N\) matrix, and \(\det\Gamma_{\zeta,\theta}\) is the separate Gram
factor.  The reduction above to a determinant of fixed dimension applies only to the
Fredholm factor.

Another question is whether the
factorization of Corollary~\ref{cor:parity_factorization} survives.  Although
\(T_N(F_m)\) is centrosymmetric, the tilted matrix
\(\bigl[(\theta_i\zeta_jF_m)_{i-j}\bigr]_{i,j=1}^N\) need not be
centrosymmetric.  For the same reversal argument to apply, one would need
conditions on \((\zeta,\theta)\)
ensuring that this matrix commutes with \(J_N\).  We do not know a convenient
condition of this kind.  These questions are distinct from the
finite-dimensional closure problem raised in \cite{Petrov26}, which concerns
symbols with rational \(z\,\partial_z\log\varphi(z)\) and polynomial or
rational tilts.

\paragraph{Acknowledgments.}
The author thanks Albrecht B\"ottcher for valuable comments and suggestions,
and Dario A.~Bini, Sergei Grudsky, Oleksiy Karlovych, Egor A.~Maximenko,
Beatrice Meini, and Leonid Petrov for useful correspondence.

This project began years ago and was initially developed without AI assistance.
ChatGPT was used for the numerical computations and a final check of the results.

\end{document}